\newcommand {\emptycomment}[1]{}
\newtheorem{thm}{Theorem}[section]
\newtheorem{lem}[thm]{Lemma}
\newtheorem{pro}[thm]{Proposition}
\newtheorem{rmk}[thm]{Remark}
\newtheorem{defi}[thm]{Definition}
\numberwithin{equation}{section}
\numberwithin{equation}{section}
\begin{document}

\title[]{On simple restricted modules of Hamiltonian superalgebras with $p$-characters of height 0}

\author{Jingyi Zhang}
\address{School of Mathematics and Statistics, Northeast Normal University, Changchun 130024, Jilin, China}
\email{jyzhang831@nenu.edu.cn}

\author{Liangyun Chen*}
\address{School of Mathematics and Statistics, Northeast Normal University, Changchun 130024, China}
\email{chenly640@nenu.edu.cn}

\begin{abstract}
	Let $H(2,1;\underline{t})$ be Hamiltonian superalgebras over $\mathbb{F}$, an algebraically closed field of prime characteristic $p>3$, which are non-restricted simple Lie superalgebras, generally.
	In this paper, we study generalized $\chi$-reduced simple modules over $H(2,1;\underline{t})$. We proved that all generalized $\chi$-reduced Kac modules of $H(2,1;\underline{t})$ are simple with $p$-characters $\chi$ of height 0. Additionally, the isomorphism classes of these simple $H(2,1;\underline{t})$-modules are classified and their dimensions are determined.
	
\end{abstract}

\thanks{*Corresponding author.}
\thanks{\emph{MSC}(2020). 17B10, 17B50, 17B70}
\thanks{\emph{Key words and phrases}. Hamiltonian superalgebras, generalized restricted Lie superalgebras, simple modules, $p$-character, height.}
\thanks{This work is supported by NNSF of
China (No. 12271085), NSF of Jilin Province (No. YDZJ202201ZYTS589) and the Fundamental Research Funds for the Central Universities.}
\maketitle

\tableofcontents

\allowdisplaybreaks
	\section{Introduction}
For Lie superalgebras over fields of characteristic $0$ (non-modular Lie superalgebras), the classification of finite-dimensional simple Lie superalgebras has been completed\cite{Kac1,Kac2}. However, for Lie superalgebras over fields of characteristic
$p$ (modular Lie superalgebras), the classification of simple modular Lie superalgebras remains an open problem due to the restrictions imposed by the characteristic of the base field\cite{DAL,SBPGDAL}. In the study of modules of modular Lie superalgebras, the research on the modules of Cartan-type simple modular Lie superalgebras plays a crucial role in the classification efforts.

As is well known, eight families of finite-dimensional graded simple modular Lie superalgebras of Cartan-type have been constructed in \cite{YZZWDL,JYFQCZCPJ,WDLYHH,WDLJXY,WDLYZZXLW}.
Hamiltonian superalgebras are one of them. When Hamiltonian superalgebras are restricted Lie superalgebras, both their restricted and non-restricted modules have been studied\cite{YFY,YFYBS}.
	However, when Hamiltonian superalgebras are non-restricted Lie superalgebras, their representation theory remains unresolved.
	In \cite{YZZQCZ}, the concept of generalized restricted Lie algebras was extended to generalized restricted Lie superalgebras.
	In \cite{YFYBSYYL,YFY0}, Yao et al. studied simple modules of Zassenhaus superalgebras (a type of generalized restricted Lie superalgebras) with $p$-characters of height $0$ and $-1$, respectively. It's natural to think that we can study simple modules of Hamiltonian superalgebras through their generalized restricted structure.

	In \cite{WDLJXY2}, Liu and Yuan studied the automorphism group of modular graded Lie superalgebras of Cartan-type. Generalized $\chi$-reduced modules (see Definition \ref{Def2.8}) of
	Hamiltonian superalgebras are entirely determined by the orbit of the automorphism group of $p$-characters $\chi$.
In this paper, we study simple modules with $p$-characters of height $0$ over Hamiltonian superalgebras $H(2,1;\underline{t})$, further enriching the classification theory of modules for Hamiltonian superalgebras.
	The paper is organized as follows. In Section \ref{Preliminaries}, some preparations such as	the generalized restricted structure of Hamiltonian superalgebras are given. In Section \ref{3}, simple modules of $H(2,1;\underline{t})$ with $p$-characters of height 0 are studied. In Section \ref{4}, the isomorphism classes of these simple $H(2,1;\underline{t})$-modules are classified and their dimensions are determined.

\section{Preliminaries}\label{Preliminaries}
In this paper, we always assume that the base field $\mathbb{F} $ is algebraically closed and of characteristic $p>3$. $\mathbb{F}_p$ is an algebraically closed field of elements considered modulo $p$. All vector spaces (modules) $V$ are over $\mathbb{F} $ and $\mathbb{Z}_{2} $-graded, i.e., $V=V_{\bar{0} }\oplus V_{\bar{1}}$. For each $\mathbb{Z}_{2} $-homogeneous element $v\in V$, we denote by $\bar{v}$ the parity of $v$. Let $A=A_{\bar{0} }\oplus A_{\bar{1}}$ be a superalgebra. A homogeneous super-derivation $\mathcal{D}  $ of $A$ is a homogeneous linear transformation of $A$ such that $\mathcal{D}( ab)=\mathcal{D} (a ) b+( -1)^{\bar{\mathcal{D} } \bar{a}  } a\mathcal{D} (b )  $ for all $a,b\in A_{\bar {0}}\cup A_{\bar {1}}$.

	\subsection{Generalized restricted structure of Hamiltonian superalgebras}
\	

Set $\underline{t}=(t_1,t_2)\in \mathbb{N}^{2}$ and $\mathcal{O}(m;\underline{t})$ be the divided power algebra of two variables $x_1$ and $x_2$ with $\mathbb{F}$-basis $\{x_1^{(i_1)}x_2^{(i_2)}\mid 0\le i_1\le p^{t_1}-1,0\le i_2\le p^{t_2}-1 \}$ under the formul $$x_k^{(j_1)}x_k^{(j_2)}=C_{j_1+j_2}^{j_1}x_k^{(j_1+j_2)} ,\,\,\forall 0\le j_1,j_2\le p^{t_k}-1, k=1,2.$$ Set $\Lambda(1)$ be the Grassmann superalgebra of one variable $\xi$. Set $\Lambda(2,1;\underline{t})=\mathcal{O}(2;\underline{t} )\otimes _{\mathbb{F} } \Lambda(1)$. Then $\Lambda(2,1;\underline{t})$ has an $\mathbb{F}$-basis $$ \{ x_1^{ ( i_1  ) } x_2^{(i_2)}\xi^{j}\mid 0\le i_1 \le p^{t_1}-1, 0\le i_2 \le p^{t_2}-1, j=0,1 \} $$ and is $\mathbb{Z}_{2} $-graded with $$\Lambda(2,1;\underline{t})_{\bar{0} }={\rm span}_{\mathbb{F} } \{ x_1^{(i_1)}x_2^{ ( i_2  ) } \mid 0\le i_1 \le p^{t_1}-1, 0\le i_2 \le p^{t_2}-1 \} $$ and $$\Lambda(2,1;\underline{t})_{\bar{1} }={\rm span}_{\mathbb{F} } \{ x_1^{(i_1)}x_2^{ ( i_2 ) }\xi \mid 0\le i_1 \le p^{t_1}-1, 0\le i_2 \le p^{t_2}-1 \}. $$ Moreover, $\Lambda(2,1;\underline{t})$ are associative superalgebras in which the multiplication is given by
\begin{align*}
x_1^{ ( i_1  ) }x_2^{(i_2)}\xi^{j}\cdot  x_1^{ ( s_1  ) }x_2^{(s_2)}\xi^{l}=C_{i_1+s_1}^{i_1} C_{i_2+s_2}^{i_2}x_1^{ ( i_1+s_1  ) }x_2^{ ( i_2+s_2  ) }\xi^{j+l}, \\
0\le i_1,s_1\le p^{t_1}-1, 0\le i_2,s_2\le p^{t_2}-1, j,l\in\{0,1\}
\end{align*}
with the convention that
\begin{align*}
x_1^{(k_1)}x_2^{(k_2)}\xi^{l}=0, \text{ for } k_1 \ge p^{t_1} \text{ or }k_2\ge p^{t_2} \text{ or } l>1.
\end{align*}
$D_1, D_2, D_3$ are special super-derivations of $\Lambda(2,1;\underline{t})$ satisfying
$$\mathcal{D}_k(x^{(i)}\xi ^j)=x^{(i-1)}\mathcal{D}_k(x)\xi^j+jx^{(i)}\mathcal{D}_k(\xi)$$
for $0 \le i \le p^{t_1+t_2}-1$, $j = 0, 1$ and $k=1,2,3$.

Set
$$W(2,1;\underline{t})=\{\sum_{i=1}^3f_iD_i\mid f_i\in \Lambda(2,1;\underline{t})\}$$
and
\begin{align*}i^{\prime}=
\left\{\begin{matrix}
2, & i=1, \\
1, & i=2,\\
3, & i=3,
\end{matrix}\right.	
\qquad
\sigma(i)=
\left\{\begin{matrix}
1, & i=1, \\
-1, & i=2,\\
1, & i=3,
\end{matrix}\right.
\qquad
\tau(i)=
\left\{\begin{matrix}
\overline{0}, & i=1,2, \\
\overline{1}, & i=3.
\end{matrix}\right.
\end{align*}
Define a linear mapping $D_H:\Lambda(2,1;\underline{t})\longrightarrow W(2,1;\underline{t})$ so that
$$D_H(f)=\sum_{i=1}^3f_iD_i, \quad \forall f \in \Lambda(2,1;\underline{t}),$$
where
$$f_i=\sigma(i^\prime)(-1)^{\tau(i^\prime)d(f)}D_{i^\prime}(f),\quad i=1,2,3.$$
By direct calculation, we can know
$$\overline{H}(2,1;\underline{t})={\rm span}_\mathbb{F}\{D_H(f)\mid f\in \Lambda(2,1;\underline{t})\}.$$
$D_1$, $D_2$ and $D_3$ are special super-derivations of $\Lambda(2,1;\underline{t})$.
An examination of the commutative subalgebra of $\overline{H}(2,1;\underline{t})$ shows that
$$D_H(f)=\sum_{i=1}^3\sigma(i)(-1)^{\tau(i)d(f)}D_i(f)D_{i^\prime}.$$
It follows from Lemma 2.10 in \cite{YZZWDL} that
\begin{align*}
[D_H(f),D_H(g)]=&D_H(\sum_{i=1}^3\sigma(i)(-1)^{\tau(i)d(f)}D_i(f)D_{i^\prime}(g))\\
=&D_H(D_H(f)(g)),\quad \forall f,g\in \Lambda(2,1;\underline{t}).
\end{align*}
A straightforward calculation implies that
\begin{align*}
H(2,1;\underline{t})=&{\rm span}_\mathbb{F}\{D_H(f)\mid f\in \mathop{\oplus}\limits_{i=0}^{\xi -1}\Lambda(2,1;\underline{t})_i\}\\
=&{\rm span}_\mathbb{F}\{D_H({x_1^{(i_1)}x_2^{(i_2)}}\xi^j)\mid 0\le i_1\le p^{t_1}-1,\, 0\le i_2\le p^{t_2}-1,\,j=0,1 ,\,i_1+i_2+j\ge 1\}\\
=&{\rm span}_\mathbb{F}\{x_1^{(i_1-1)}x_2^{(i_2)}\xi^jD_2-x_1^{(i_1)}x_2^{(i_2-1)}\xi^jD_1-jx_1^{(i_1)}x_2^{(i_2)}D_3\\
&\mid 0\le i_1\le p^{t_1}-1,\, 0\le i_2\le p^{t_2}-1,\,j=0,1,\, i_1+i_2+j\ge 1\}\\
=&H_{\bar{0}}\oplus H_{\bar{1}},
\end{align*}
where
\begin{align*}
H_{\bar{0}}=&{\rm span}_\mathbb{F}\{D_H(x_1^{(i_1)}x_2^{(i_2)})\mid 0\le i_1\le p^{t_1}-1, \,0\le i_2\le p^{t_2}-1,\, i_1+i_2\ge 1\}
\\=&{\rm span}_\mathbb{F}\{x_1^{(i_1-1)}x_2^{(i_2)}D_2-x_1^{(i_1)}x_2^{(i_2-1)}D_1\mid 0\le i_1\le p^{t_1}-1, \,0\le i_2\le p^{t_2}-1,\,i_1+i_2\ge 1\}
\end{align*}
and
\begin{align*}
H_{\bar{1}}=&{\rm span}_\mathbb{F}\{D_H(x_1^{(i_1)}x_2^{(i_2)}\xi)\mid 0\le i_1\le p^{t_1}-1,\, 0\le i_2\le p^{t_2}-1\}
\\=&{\rm span}_\mathbb{F}\{x_1^{(i_1-1)}x_2^{(i_2)}\xi D_2-x_1^{(i_1)}x_2^{(i_2-1)}\xi D_1-x_1^{(i_1)}x_2^{(i_2)}D_3\mid 0\le i_1\le p^{t_1}-1,\, 0\le i_2\le p^{t_2}-1\}.
\end{align*}
It is a routine to check that $H(2,1;\underline{t})$ are Lie subsuperalgebras of the derivation superalgebras of $\Lambda(2,1;\underline{t})$ under the usual Lie bracket (\cite{YZZWDL}). Moreover, they are simple Lie superalgebras
and referred to as Hamiltonian superalgebras, the even part of which contains a subalgebra isomorphic to the usual Hamiltonian algebras(\cite{HS}).

The following notion of a generalized restricted Lie superalgebra generalizes the notion of a restricted Lie superalgebra.
\begin{defi} {\rm(\cite{YZZQCZ})}\label{2.2}
	Let $L=L_{\bar{0}} \oplus L_{\bar{1}}$ be a Lie superalgebra,
	if the even part  $L_{\bar{0}}$ is a generalized restricted Lie algebra and the odd part  $L_{\bar{1}}$ as an adjoint $L_{\bar{0}}$-module is a generalized restricted module,i.e., there exists an ordered basis $ E=\{e_{i} \mid i \in I\}$ of $L_{\bar{0}} $, an non-negative $|I| $-tuple $ \mathbf{s}=(s_{i})_{i \in I}$  and a generalized restricted map  $\varphi_{\mathrm{s}}: E \rightarrow L_{\bar{0}}$ sending  $e_{i}$ to $e_{i}^{\varphi_{\mathbf{s}}}$ such that  $(\operatorname{ad}e_{i})^{p^{s_i}}(y)=(\operatorname{ad}e_{i}^{\varphi_{\mathbf{s}}})(y)$ for all $y\in L$, then $L$ is a generalized restricted Lie superalgebra (GR Lie superalgebra) associated with $E$ and $\varphi_{\mathbf{s}}$.
\end{defi}

\begin{rmk}	
	
	$( \mathrm {i} ) $ One usually denotes the GR Lie superalgebra mentioned in \rm{Definition $\ref{2.2}$} \textit{by $(L, E, \varphi_{\mathbf{s}})$ to indicate the ordered basis $E$ and the generalized restricted map $\varphi_{\mathbf{s}}$.}
	
	$( \mathrm {ii} ) $ \textit{Any restricted Lie superalgebra  $(L,[p])$ is a GR Lie superalgebra, where  $\varphi_{\mathbf{s}}= \left.[p]\right|_{E}$ and $\mathbf{s}=(s_{i})_{i \in I}$ with $s_{i}=1, \forall i \in I$. }	
\end{rmk}

Next, we always assume $\mathfrak{g}=H(2,1,\underline{t})$ are Hamiltonian superalgebras, We have the following conclusions.
\begin{pro}
	Hamiltonian superalgebras are GR Lie superalgebras associated with an ordered basis
	\begin{align*}
	\{&D_H(x_1),D_H(x_2),\cdots,D_H(x_1^{(i_1)}x_2^{(i_2)}),\cdots , D_H(x_1^{(p^{t_1}-1)}),D_H(x_2^{(p^{t_2}-1)}),D_H(x_1^{(p^{t_1}-1)}x_2^{(p^{t_2}-1)})\\
	&\mid 0\le i_1\le p^{t_1}-1,\,0\le i_2\le p^{t_2}-1,\,i_1+i_2\ge 1\}
	\end{align*}
and generalized restricted map $\varphi _{\mathbf{s}}$ that satisfy
	$$\varphi_{\mathbf{s}}(D_H(x_1x_2))=D_H(x_1x_2),$$
	$$\varphi_{\mathbf{s}}(D_H(x_1^{(j_1)}x_2^{(j_2)}))=0\,\,\text{ for }j_1,j_2\ne 1\text{ at the same time},$$
	where $\mathbf{s}=(t_2,t_1,1,1,\cdots, 1)$.
\end{pro}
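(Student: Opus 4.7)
The plan is to verify Definition~\ref{2.2} directly for the proposed ordered basis $E$ of $H_{\bar 0}$ and the specified map $\varphi_{\mathbf s}$. The key simplification comes from the bracket identity proved earlier in the excerpt,
$$
[D_H(f),D_H(g)] = D_H(D_H(f)(g)),
$$
which by an immediate induction yields
$$
(\operatorname{ad} D_H(f))^n(D_H(g)) = D_H\bigl((D_H(f))^n(g)\bigr)
$$
for every $n\ge 0$, where $(D_H(f))^n$ denotes $n$-fold composition as an operator on $\Lambda(2,1;\underline{t})$. Since this formula is valid for all $g\in\Lambda(2,1;\underline{t})$ (even and odd), checking the GR condition on a given even basis vector $D_H(f)$ reduces to an operator identity on the superalgebra $\Lambda(2,1;\underline{t})$ itself: either $(D_H(f))^{p^s}$ is zero (when $f^{\varphi_{\mathbf s}}=0$) or it equals $D_H(f)$ (only for $f=x_1x_2$).

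I would then treat the four types of basis elements in turn. For $D_H(x_1)=D_2$ with $s_1=t_2$, exploit that $D_2$ is the divided-power partial in $x_2$ and annihilates $\xi$, so $D_2^{p^{t_2}}$ kills every monomial $x_1^{(a)}x_2^{(b)}\xi^j$ with $b\le p^{t_2}-1$; this matches $\varphi_{\mathbf s}(D_H(x_1))=0$. The case $D_H(x_2)=-D_1$ is symmetric with $s_2=t_1$. For $D_H(x_1x_2)=x_2D_2-x_1D_1$, a direct calculation gives
$$
(x_2D_2-x_1D_1)\bigl(x_1^{(a)}x_2^{(b)}\xi^j\bigr) = (b-a)\,x_1^{(a)}x_2^{(b)}\xi^j,
$$
so this operator is diagonal on the monomial basis with eigenvalues in $\mathbb F_p$; by Fermat's little theorem its $p$-th power equals itself, which matches $\varphi_{\mathbf s}(D_H(x_1x_2))=D_H(x_1x_2)$.

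For the remaining basis elements $D_H(x_1^{(j_1)}x_2^{(j_2)})$ with $(j_1,j_2)\notin\{(1,0),(0,1),(1,1)\}$, we must show $(D_H(f))^p=0$ as an operator on $\Lambda(2,1;\underline{t})$. Here I would invoke the standard fact that the $p$-th power of a derivation in characteristic $p$ is again a derivation, so vanishing needs only to be checked on the three algebra generators $x_1,x_2,\xi$. The action on $\xi$ is zero already after one step since $D_1\xi=D_2\xi=0$. The iterated action on $x_1$ and $x_2$ produces divided-power monomials whose coefficients vanish modulo $p$ via the identity $x_k^p=p!\,x_k^{(p)}\equiv 0$ together with the truncation $x_k^{(r)}=0$ for $r\ge p^{t_k}$. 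The degenerate subcases $(j_1,j_2)=(2,0)$ and $(0,2)$ are immediate from $(x_1D_2)^p=x_1^pD_2^p=0$ and $(x_2D_1)^p=x_2^pD_1^p=0$.

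The main obstacle is controlling the divided-power coefficients that appear in $(D_H(f))^p(x_1)$ and $(D_H(f))^p(x_2)$ for arbitrary $(j_1,j_2)$. The cleanest route is to pass to the standard $\mathbb Z$-grading on $H(2,1;\underline{t})$, under which $D_H(x_1^{(j_1)}x_2^{(j_2)})$ has degree $j_1+j_2-2$: for sufficiently positive degree the $p$-th iterate overflows the finite range of degrees in $\Lambda(2,1;\underline{t})$ and is therefore zero, while the finitely many low-degree exceptions are handled by the explicit divided-power computation above. Combining these cases covers every basis element and completes the verification that $(H(2,1;\underline{t}),E,\varphi_{\mathbf s})$ is a GR Lie superalgebra.
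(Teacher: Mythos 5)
Your reduction via $[D_H(f),D_H(g)]=D_H(D_H(f)(g))$, hence $(\operatorname{ad}D_H(f))^{n}(D_H(g))=D_H\bigl((D_H(f))^{n}(g)\bigr)$, is sound and in fact makes explicit what the paper's proof uses implicitly; your treatments of $D_H(x_1)=D_2$, $D_H(x_2)=-D_1$ and $D_H(x_1x_2)=x_2D_2-x_1D_1$ agree with the paper's computations. The gap lies in the remaining --- and main --- family of basis elements. First, the claim that a derivation of $\Lambda(2,1;\underline{t})$ vanishes once it vanishes on $x_1,x_2,\xi$ is false: the divided power algebra $\mathcal{O}(2;\underline{t})$ is \emph{not} generated as an associative algebra by $x_1,x_2$ (indeed $x_k^{p}=p!\,x_k^{(p)}=0$), so a derivation is only determined by its values on all the generators $x_k^{(p^i)}$, $0\le i\le t_k-1$, together with $\xi$. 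The operator $D_2^{p}$ is the standard counterexample: it kills $x_1,x_2,\xi$ yet sends $x_2^{(p)}$ to $1$; this is precisely why $H(2,1;\underline{t})$ fails to be restricted for $\underline{t}\ne(1,1)$ and why $\mathbf{s}$ begins with $(t_2,t_1)$ rather than $(1,1)$.

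Second, the degree-overflow argument does not dispose of the bulk of the cases. The element $D_H(x_1^{(j_1)}x_2^{(j_2)})$ has degree $j_1+j_2-2$, so its $p$-th iterate raises degree by $p(j_1+j_2-2)$; this exceeds the top degree $p^{t_1}+p^{t_2}-1$ of $\Lambda(2,1;\underline{t})$ only for rather large $j_1+j_2$ (for instance, with $p=5$, $\underline{t}=(2,1)$ and $(j_1,j_2)=(2,1)$ the iterate raises degree by $5$ while monomials reach degree $29$). The surviving cases are therefore not ``finitely many low-degree exceptions handled above'': your explicit computations only covered $(j_1,j_2)=(2,0)$ and $(0,2)$. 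What is actually required --- and what the paper supplies --- is the computation of the $p$-fold iterated action on the (odd) monomial basis, yielding the coefficient $\prod_{k=1}^{p}\bigl(C_{i_1+kj_1-k}^{j_1-1}C_{i_2+kj_2-k}^{j_2}-C_{i_1+kj_1-k}^{j_1}C_{i_2+kj_2-k}^{j_2-1}\bigr)$ together with a verification that this product vanishes in $\mathbb{F}_p$; nothing in your proposal substitutes for that step. (A minor structural difference: the paper invokes Shu's result to settle the even part and only verifies the generalized restricted condition on $\mathfrak{g}_{\bar 1}$, whereas your operator-level identity on $\Lambda(2,1;\underline{t})$ would cover both parts at once if it were completed.)
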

\begin{proof}
	From \cite{BS} it follows that $\mathfrak{g}_{\bar{0}}$ are GR Lie algebras. So we only need to show that
	\begin{align*}
	(\operatorname{ad}D_H(x_1^{(i_1)}x_2^{(i_2)}))^{p^{s_i}}(y)=(\operatorname{ad}D_H(x_1^{(i_1)}x_2^{(i_2)})^{\varphi_{\mathbf{s}}})(y)
	\end{align*}
	for all $y\in \mathfrak{g}_{\bar{1}},\,0\le i_1\le p^{t_1}-1,\,0\le i_2\le p^{t_2}-1.$
	Indeed, for one thing,
	\begin{align*}
	&[\underbrace{D_H(x_1x_2),\cdots,[D_H(x_1x_2)} _{p \,\text{times}},D_H(x_1^{(j_1)}x_2^{(j_2)}\xi)]\cdots]=(i_2-i_1)^pD_H(x_1^{(j_1)}x_2^{(j_2)}\xi),
	\end{align*}
	for another
	\begin{align*}
	[D_H(x_1x_2), D_H(x_1^{(i_1)}x_2^{(i_2)}\xi)]=(i_2-i_1)D_H(x_1^{(j_1)}x_2^{(j_2)}\xi).
	\end{align*}
	Thus $\varphi_{\mathbf{s}}(D_H(x_1x_2))=D_H(x_1x_2)$.
	And on the one hand, for $j_1=1, \,j_2=0, \,0\le i_1\le p^{t_1}-1,\,0\le i_2\le p^{t_2}-1$,
	\begin{align*}
[\underbrace{D_H(x_1),\cdots,[D_H(x_1)} _{p^{t_2} \,\text{times}},D_H(x_1^{(i_1)}x_2^{(i_2)}\xi)]\cdots]
	=&[\underbrace{D_H(x_1),\cdots,[D_H(x_1)} _{p^{t_2-1} \,\text{times}},D_H(x_1^{(i_1)}x_2^{(i_2-1)}\xi)]\cdots]\\
	=&\cdots=D_H(x_1^{(i_1)}x_2^{(i_2-p^{t_2})}\xi)=0,
	\end{align*}
	for $j_1=0, \,j_2=1, \,0\le i_1\le p^{t_1}-1,\,0\le i_2\le p^{t_2}-1$,
 \begin{align*}
 [\underbrace{D_H(x_2),\cdots,[D_H(x_2)} _{p^{t_1} \,\text{times}},D_H(x_1^{(i_1)}x_2^{(i_2)}\xi)]\cdots]
 =&[\underbrace{D_H(x_2),\cdots,[D_H(x_2)} _{p^{t_1-1} \,\text{times}},D_H(x_1^{(i_1-1)}x_2^{(i_2)}\xi)]\cdots]\\
 =&\cdots=D_H(x_1^{(i_1-p^{t_1})}x_2^{(i_2)}\xi)=0,
 \end{align*}
 	for $1\le j_1\le p^{t_1}-1, \,1\le j_2\le p^{t_1}-1 \text{ with } j_1,j_2\ne 1\text{ at the same time},\,0\le i_1\le p^{t_1}-1,\,0\le i_2\le p^{t_2}-1$,
  \begin{align*}
 &[ \underbrace{D_H(x_1^{(j_1)}x_2^{(j_2)}),\cdots ,[D_H(x_1^{(j_1)}x_2^{(j_2)})}_{p  \text{ times}},D_H(x_1^{(i_1)}x_2^{(i_2)}\xi)]\cdots]\\
 =&(C_{i_1+j_1-1}^{j_1-1}C_{i_2+j_2-1}^{j_2}-C_{i_1+j_1-1}^{j_1}C_{i_2+j_2-1}^{j_2-1})(C_{i_1+2j_1-2}^{j_1-1}C_{i_2+2j_2-2}^{j_2}-C_{i_1+2j_1-2}^{j_1}C_{i_2+2j_2-2}^{j_2-1})\\
 &\cdots(C_{i_1+kj_1-k}^{j_1-1}C_{i_2+kj_2-k}^{j_2}-C_{i_1+kj_1-k}^{j_1}C_{i_2+kj_2-k}^{j_2-1})\cdots(C_{i_1+pj_1-p}^{j_1-1}C_{i_2+pj_2-p}^{j_2}-C_{i_1+pj_1-p}^{j_1}C_{i_2+pj_2-p}^{j_2-1})\\
 &D_H(x_1^{(i_1+pj_1-p)}x_2^{(i_2+pj_2-p)}\xi)=0,
 \end{align*}
 on the other we have
 $[0,D_H(x_1^{(i_1)}x_2^{(i_2)}\xi)]=0.$
 Consequently, $\varphi_{\mathbf{s}}(D_H(x_1^{(j_1)}x_2^{(j_2)}))=0\,\,\text{ for }j_1,j_2\ne 1\text{ at the same time},$
 as desired.
\end{proof}

 \begin{rmk}
 	Hamiltonian superalgebras are restricted Lie superalgebras if and only if $t=1$.
 \end{rmk}
There is a natural $\mathbb{Z} $-grading structure on $\mathfrak{g} $, i.e., $\mathfrak{g}=\bigoplus_{i=-1}^{p^{t_1+t_2}-3} \mathfrak{g}_{[i]}$,
where
\begin{align*}
\mathfrak{g}_{[i]}={\rm span}_{\mathbb{F}}\{D_H(x_1^{(i_1)}x_2^{(i_2)}\xi^j)\mid i_1+i_2+j=i+2\},
\text{ for } 0\le i_1\le p^{t_1}-1, 0\le i_2\le p^{t_2}-1, j=0,1.
\end{align*}
Associated with this grading, one has the following filtration:
\begin{equation}\label{e2.2}
\mathfrak{g}=\mathfrak{g}_{-1} \supset \mathfrak{g}_{0} \supset \cdots \supset \mathfrak{g}_{p^{t_1+t_2}-3},
\end{equation}
where
\begin{align*}
\mathfrak{g}_{i}=\bigoplus_{j \geq i} \mathfrak{g}_{[j]} \text { for }-1 \leq i \leq p^{t_1+t_2}-3.
\end{align*}
In accordance with \cite{WQWLZ} the generators of the osp$(1|2)$ superalgebra read as follows:
\begin{align*}
	\{h:=E_{22}-E_{33},e:=E_{23},f:=E_{32}\mid E:=E_{13}+E_{21},F:=E_{12}-E_{31}\},
\end{align*}
and satisfy the following commutation and anticommutation relations:
$$[h,e]=e,\quad[h,f]=-f,\quad[h,E]=E,\quad[h,F]=-F,$$
$$[e,f]=h,\quad[e,E]=0,\quad[e,F]=-E,\quad[f,E]=-F,$$
$$[f,F]=0,\quad[E,E]=2e,\quad[E,F]=h,\quad[F,F]=-2f.$$

\begin{lem}
	The zero graded component $\mathfrak{g}_{[0]}\cong {\rm osp}(1|2)$.
\end{lem}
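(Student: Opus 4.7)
The plan is to exhibit an explicit basis of $\mathfrak{g}_{[0]}$, match dimensions with those of $\mathrm{osp}(1|2)$, and then construct a Lie superalgebra isomorphism $\phi$ by pairing the $\mathfrak{sl}_2$-triple in the even part with $(h, e, f)$ and the two odd generators with $(E, F)$, verifying all (anti)brackets via the identity $[D_H(f), D_H(g)] = D_H(D_H(f)(g))$ recalled earlier in the section.

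Using the description of $\mathfrak{g}$ given in Section~\ref{Preliminaries}, a basis of $\mathfrak{g}_{[0]}$ consists of the three even elements $D_H(x_1^{(2)}) = x_1 D_2$, $D_H(x_1 x_2) = x_2 D_2 - x_1 D_1$, $D_H(x_2^{(2)}) = -x_2 D_1$, and the two odd elements $D_H(x_1 \xi) = \xi D_2 - x_1 D_3$, $D_H(x_2 \xi) = -\xi D_1 - x_2 D_3$; in particular, $\dim \mathfrak{g}_{[0]} = 5 = \dim \mathrm{osp}(1|2)$. The relevant structure constants are then a short list: $[D_H(x_1 x_2), D_H(x_i^{(2)})]$ and $[D_H(x_1 x_2), D_H(x_i \xi)]$ to exhibit the weight structure, $[D_H(x_2^{(2)}), D_H(x_1^{(2)})]$ to close the even $\mathfrak{sl}_2$-triple, and the three anticommutators $[D_H(x_i \xi), D_H(x_j \xi)]$ for $i, j \in \{1, 2\}$ to check that the self-brackets of the odd part reproduce the even generators. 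Each of these is a short direct computation from $[D_H(f), D_H(g)] = D_H(D_H(f)(g))$, after evaluating $D_H(f)$ on the inner coordinate polynomial.

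Once these brackets are tabulated, I would declare $\phi(D_H(x_2^{(2)})) = \alpha e$, $\phi(D_H(x_1^{(2)})) = \beta f$, $\phi(D_H(x_1 x_2)) = \gamma h$, $\phi(D_H(x_2 \xi)) = \delta E$, $\phi(D_H(x_1 \xi)) = \varepsilon F$ and solve for the scalars $\alpha, \beta, \gamma, \delta, \varepsilon \in \mathbb{F}$ that make $\phi$ a Lie superalgebra homomorphism. The main (and essentially only) obstacle is this bookkeeping: the constraints coming from the distinct bracket relations $[h,e]=e,\,[h,f]=-f,\,[e,f]=h,\,[h,E]=E,\,[h,F]=-F,\,[E,E]=2e,\,[F,F]=-2f,\,[E,F]=h,\,[e,F]=-E,\,[f,E]=-F,\,[e,E]=0,\,[f,F]=0$ yield a small system of equations on the five scalars. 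One must verify it admits a simultaneous nonzero solution over $\mathbb{F}$; in particular the two constraints of the form $[E,E]=2e$ and $[F,F]=-2f$ force extracting a square root of a scalar, which is available because $\mathbb{F}$ is algebraically closed. Once such scalars are produced, $\phi$ is a $\mathbb{Z}_2$-graded linear bijection between spaces of equal dimension that intertwines all brackets, hence the required isomorphism.
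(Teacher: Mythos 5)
Your proposal follows essentially the same route as the paper: the paper simply exhibits the explicit assignment $x_2D_2-x_1D_1\mapsto h$, $x_1D_2\mapsto f$, $x_2D_1\mapsto e$, $\xi D_2-x_1D_3\mapsto F$, $\xi D_1+x_2D_3\mapsto E$ and leaves the bracket verification as a direct computation, which is exactly the map your scalar system produces (with all scalars $\pm 1$, so no square-root extraction is actually needed). Your basis identification via $D_H$ of the degree-two monomials is correct and consistent with the paper's presentation.
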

\begin{proof}
A straightforward computation yields the following map:
\begin{align*}
	\phi: \mathfrak{g}_{[0]}&\longrightarrow {\rm osp}(1|2)\\
	x_2D_2-x_1D_1&\longmapsto h\\
	x_1D_2&\longmapsto f\\
	x_2D_1&\longmapsto e\\
	\xi D_2-x_1D_3&\longmapsto F\\
	\xi D_1+x_2D_3&\longmapsto E.
\end{align*}
Then it is easy to see that $\mathfrak{g}_{[0]}\cong {\rm osp}(1|2)$, as desired.
	\end{proof}

Furthermore, we have the triangular decomposition $\mathfrak{g}_{[0]}=\mathfrak{n}^{-} \oplus \mathfrak{h} \oplus \mathfrak{n}^{+}$, where $\mathfrak{n}^{-}=\mathbb{F} x_1D_2+\mathbb{F}(\xi D_2-x_1D_3), \mathfrak{n}^{+}=\mathbb{F} x_2D_1+\mathbb{F}(\xi D_1+x_2D_3) \text { and } \mathfrak{h}=\mathbb{F}  (x_2D_2-x_1D_1)$. Set $ \mathfrak{b}=\mathfrak{h} \oplus \mathfrak{n}^{+}$ and $B=\mathfrak{b} \oplus \mathfrak{g}_{1}=\mathfrak{b}+\sum_{i \geq 1} \mathfrak{g}_{[i]}$. It is easy to check
that the subalgebras $\mathfrak{b}$ and $B$ are (generalized) restricted subalgebras of $\mathfrak{g}$.

\subsection{Generalized $\chi$-reduced modules of generalized restricted Lie superalgebras}
\
\newline
\indent
\begin{defi}{\rm(\cite{YFY0})}
	Let $L=L_{\overline{0}} \oplus L_{\overline{1}}$ be a GR Lie superalgebra, a $p$-character $\chi\in L^*$, then
	$$\mathrm{ht}(\chi)=\min\{i \geq-1 \mid \chi (L_{i})=0\}$$
	is called the height of $\chi$.
\end{defi}
In this paper, we study generalized $\chi$-reduced modules of $H(2,1,\underline{t})$ with ht$( \chi) =0$.
\begin{defi}{\rm(\cite{YFY0})}
	Let $L=L_{\overline{0}} \oplus L_{\overline{1}}$ be a GR Lie superalgebra, a $p$-character $\chi\in L^*$. Let $J_\chi$ be the ideal of the universal enveloping superalgebra $U(L)$ generated by $e_i^{p^{s_i}}-e_i^{\varphi_{\mathbf{s}}}-\chi(e_i)^{p^{s_i}},  i\in I$. If
	$$U_\chi((L,E,\varphi_{\mathbf{s}}))=U(L)/J_\chi,$$
	$U_\chi((L,E,\varphi_{\mathbf{s}}))$ is called the generalized $\chi$-reduced enveloping superalgebra of $L$.
\end{defi}
One usually denotes the generalized  $\chi$-reduced enveloping superalgebra  $U_{\chi}((L, E, \varphi_{\mathbf{s}}))$ by $U_{\chi}(L)$ for brevity. Then a generalized $\chi$-reduced $L$-module is a $U_\chi(L)$-module.	In the case $\chi =0$, a generalized $\chi$-reduced module is said to be a generalized restricted module, and we call $U_0(L)$ the generalized restricted enveloping superalgebra of $L$, denoted
by $u(L)$ for brevity.
\begin{defi}{\rm(\cite{YFY0})}\label{Def2.8}
	Let $L=L_{\overline{0}} \oplus L_{\overline{1}}$ be a GR Lie superalgebra, a $p$-character $\chi\in L^*$, $v\in M$, if
	$$e_{i}^{p^{s_{i}} } \cdot v-e_{i}^{\varphi_{\mathbf{s}}} \cdot v=   \chi(e_{i})^{p^{s_{i}}} v, \forall v \in M, i \in I,$$
	$L$-module $M$ is said to be a generalized  $\chi$-reduced one.
\end{defi}
\begin{rmk}
	Let $L$ is $\mathbb{Z}$-graded structure, then all generalized $\chi$-reduced $L$-modules constitute a full subcategory of the category of $L$-modules, denoted by $U_\chi(L)$-mod.
\end{rmk}

\section{Simple modules of Hamiltonian superalgebras}\label{3}
In this section, first, we study the restricted simple modules of $\mathfrak{g}_{[0]}$. Then we study the structure of simple modules of $\mathfrak{g}$ with $p$-characters of height $0$.
\subsection{Restricted simple modules of $\mathfrak{g}_{[0]}$}\label{sub1}
\
\newline
\indent
Recall that $\mathfrak{g}=\mathfrak{g}_{[-1]}\oplus\mathfrak{g}_0$ with $\mathfrak{g}_0=\mathfrak{g}_{[0]}\oplus\mathfrak{g}_1$ and $\mathfrak{g}_{[0]}\cong {\rm osp}(1|2)$. From \cite[Theorem 2.3(N. Jacobson), Section 2.2] {HR}, we know that ${\rm osp}(1|2)_{\bar{0}}={\rm sp}(2)$ is a restricted Lie algebra, obviously. Then $h\in \mathfrak{h}$ is semisimple(\cite[Definition, Section 2.3]{HR}). Further, we know
each finite dimensional restricted $\mathfrak{g}_{[0]}$-module $M$ is a weight module, i.e., $M=\oplus_{\lambda\in \mathfrak{h}^*}M_{\lambda}$, where $M_\lambda=\{m\in M\mid h\cdot m=\lambda(h)m,\forall h\in \mathfrak{h}\}$(\cite[Proposition 3.3(2), Section 2.3]{HR}). Furthermore, $M$ is a restricted module, hence $\lambda(h)^{p}=\lambda(h^{[p]})$ for any weight $\lambda$ of $M$ and $h\in \mathfrak{h}$. Accordingly, we have $\lambda(x_2D_2-x_1D_1)\in \mathbb{F}_{p}$. We regard such weights as restricted weights. Set $\lambda\in \mathbb{F}_p$ be the restricted weights. Then the isomorphism classes of restricted simple modules of $\mathfrak{g}_{[0]}$, denoted by $L^0(\lambda)$, are parameterized by $\lambda \in \mathbb{F}_{p}$. More specifically, set $$v_{\lambda,k,l}=(x_1D_2)^k(\xi D_2-x_1D_3)^l\otimes v_\lambda, 0\le k\le p-1,l=0,1.$$
For $\lambda=0$, simplify $v_{\lambda,0,0}$ as $v_0$, the simple restricted $\mathfrak{g}_{[0]}$-module $L^0(\lambda)$ is one-dimensional with a basis $v_0$ such that
\begin{align*}
(x_2D_2-x_1D_1)\cdot v_0=x_1D_2\cdot v_0=x_2D_1\cdot v_0=(\xi D_2-x_1D_3)\cdot v_0=(\xi D_1+x_2D_3)\cdot v_0=0.
\end{align*}
And for $\lambda \in \mathbb{F}_p$ with $\lambda \ne 0$, the
simple restricted $\mathfrak{g}_{[0]}$-module $L^0(\lambda)$ is $(2\lambda+1)$-dimensional with a basis $\{v_{\lambda,k,l}\mid 0\le k\le \lambda, l=0,1, k+l\le \lambda\}$
with the action of $\mathfrak{g}_{[0]}$ defined as follows:	
\begin{align*}
(x_2D_2-x_1D_1)\cdot v_{\lambda, k,0}=&(\lambda-2k) v_{\lambda,k,0},\\
(x_2D_2-x_1D_1)\cdot v_{\lambda, k,1}=&(\lambda-2k-1) v_{\lambda,k,1},\\
(\xi D_2-x_1D_3)\cdot v_{\lambda, k,0}=&v_{\lambda,k,1},\\
(\xi D_2-x_1D_3)\cdot v_{\lambda, k,1}=&-v_{\lambda,k+1,0},\\
x_1D_2\cdot v_{\lambda, k,0}=&v_{\lambda, k+1,0},\\
x_1D_2\cdot v_{\lambda, k,1}=&v_{\lambda, k+1,1},\\
x_2D_1\cdot v_{\lambda, k,0}=&k(\lambda+1-k)v_{\lambda, k-1,0},\\
x_2D_1\cdot v_{\lambda, k,1}=&k(\lambda-k)v_{\lambda, k-1,1},\\
(\xi D_1+x_2D_3)\cdot v_{\lambda, k,0}=&kv_{\lambda,k-1,1},\\
(\xi D_1+x_2D_3)\cdot v_{\lambda, k,1}=&(\lambda-k) v_{\lambda,k,0}.
\end{align*}
We specify that $v_{\lambda,k,0}=0$ when $k<0$ or $k>\lambda$ and $v_{\lambda,k,1}=0$ when $k<0$ or $k\ge\lambda$.
\subsection{Simple modules of Hamiltonian superalgebras with $p$-characters of height 0}
\

 Set $\chi\in \mathfrak{g}^{*}$ with ${\rm ht}(\chi)=0$, i.e., $\chi(\mathfrak{g}_0)=0$ and $\chi(\mathfrak{g}_{[-1]})\ne 0$. We make the convention that any linear function $\chi\in \mathfrak{g}^*$ in this paper satisfies $\chi(\mathfrak{g}_{\bar{1}})=0$.
Under the action of the automorphism group of $\mathfrak{g}$, we can take $\chi$ as one of
 the following two types without loss of generality:

 \textbf{Type I\;:} $\chi(D_1)=\chi(D_2)=1$ and $\chi(D_3)=\chi(\mathfrak{g}_0)=0$.
 
  \textbf{Type II\;:} $\chi(D_1)=1$ and $\chi(D_2)=\chi(D_3)=\chi(\mathfrak{g}_0)=0$.

 \textbf{Type III\;:} $\chi(D_2)=1$ and $\chi(D_1)=\chi(D_3)=\chi(\mathfrak{g}_0)=0$.

Then each simple $U_\chi(\mathfrak{g}_0)$-module is a restricted $\mathfrak{g}_{[0]}$-module $L^0(\lambda)$ with trivial $\mathfrak{g}_1$-action for some $\lambda \in \mathbb{F}_p$. We define generalized $\chi$-reduced Kac $\mathfrak{g}$-modules $K_\chi(\lambda)$ as the induced $U_\chi(\mathfrak{g})$-module, which are $U_\chi(\mathfrak{g})\otimes_{u(\mathfrak{g}_0)}L^0(\lambda)$. There is the following intuitive observation.
\begin{lem}
	Each simple $U_\chi(\mathfrak{g})$-module is a quotient of a generalized $\chi$-reduced Kac $\mathfrak{g}$-module $K_\chi(\lambda)$ for some $\lambda \in \mathbb{F}_p$.
\end{lem}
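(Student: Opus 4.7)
The plan is to locate a simple $u(\mathfrak{g}_0)$-submodule of $M$ isomorphic to some $L^0(\lambda)$ and then apply Frobenius reciprocity. Let $M$ be a simple $U_\chi(\mathfrak{g})$-module. Since $\mathrm{ht}(\chi)=0$ forces $\chi|_{\mathfrak{g}_0}=0$, the restriction of $M$ to $\mathfrak{g}_0$ is a $u(\mathfrak{g}_0)$-module; as $u(\mathfrak{g}_0)$ is finite-dimensional, $M$ contains at least one simple $u(\mathfrak{g}_0)$-submodule $N$.

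Next, I would argue that the graded ideal $\mathfrak{g}_1=\bigoplus_{j\ge 1}\mathfrak{g}_{[j]}$ of $\mathfrak{g}_0$ acts as zero on $N$. Every homogeneous $x\in\mathfrak{g}_1$ acts nilpotently on $N$: for $x\in(\mathfrak{g}_1)_{\bar{0}}$, the explicit description of $\varphi_{\mathbf{s}}$ in the preceding proposition gives $\varphi_{\mathbf{s}}(x)=0$ (the only nonzero value of $\varphi_{\mathbf{s}}$ is on $D_H(x_1x_2)\in\mathfrak{g}_{[0]}$, which does not lie in $\mathfrak{g}_1$), and together with $\chi(x)=0$ the defining relation $x^{p^{s_i}}=x^{\varphi_{\mathbf{s}}}+\chi(x)^{p^{s_i}}$ collapses to $x^{p^{s_i}}=0$ in $u(\mathfrak{g}_0)$; for $x\in(\mathfrak{g}_1)_{\bar{1}}$ we have $x^{2}=\tfrac{1}{2}[x,x]\in(\mathfrak{g}_1)_{\bar{0}}$, so nilpotence follows from the even case combined with the boundedness of the $\mathbb{Z}$-grading (\ref{e2.2}). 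A super Engel theorem then gives $N^{\mathfrak{g}_1}=\{v\in N\mid \mathfrak{g}_1\cdot v=0\}\neq 0$; because $\mathfrak{g}_1$ is an ideal of $\mathfrak{g}_0$, $N^{\mathfrak{g}_1}$ is a $\mathfrak{g}_0$-submodule, and simplicity of $N$ forces $N^{\mathfrak{g}_1}=N$. Hence $N$ descends to a simple module over $u(\mathfrak{g}_0/\mathfrak{g}_1)\cong u(\mathfrak{g}_{[0]})$, and by the classification recalled in Section \ref{sub1} we have $N\cong L^0(\lambda)$ for some $\lambda\in\mathbb{F}_p$.

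Finally, the inclusion $L^0(\lambda)\cong N\hookrightarrow M$ of $u(\mathfrak{g}_0)$-modules lifts, via Frobenius reciprocity for the inclusion $u(\mathfrak{g}_0)\hookrightarrow U_\chi(\mathfrak{g})$, to a $U_\chi(\mathfrak{g})$-homomorphism
$$
\Phi:K_\chi(\lambda)=U_\chi(\mathfrak{g})\otimes_{u(\mathfrak{g}_0)}L^0(\lambda)\longrightarrow M,\qquad u\otimes v\mapsto u\cdot v.
$$
The image $\Phi(K_\chi(\lambda))$ contains $1\otimes N\neq 0$, hence is nonzero; simplicity of $M$ then forces $\Phi$ to be surjective, exhibiting $M$ as the required quotient of $K_\chi(\lambda)$.

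The main obstacle is the nilpotence step in the middle paragraph. The generalized restricted structure does not in general drive the $p$-powers of $\mathfrak{g}_1$-elements to zero; what rescues the argument is the very concrete form of $\varphi_{\mathbf{s}}$ produced earlier (nonzero only on $D_H(x_1x_2)\notin\mathfrak{g}_1$) together with the finite length of the $\mathbb{Z}$-grading, both of which must be invoked explicitly before Engel's theorem in the super setting can be applied.
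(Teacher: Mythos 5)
Your proof is correct and follows essentially the same route as the paper: restrict the simple module to $U_\chi(\mathfrak{g}_0)=u(\mathfrak{g}_0)$, pick a simple submodule, identify it with some $L^0(\lambda)$, and induce up via Frobenius reciprocity (the universal property of $U_\chi(\mathfrak{g})\otimes_{u(\mathfrak{g}_0)}L^0(\lambda)$), with simplicity of the target forcing surjectivity. The only difference is that you spell out the justification for the identification $N\cong L^0(\lambda)$ --- nilpotent action of $\mathfrak{g}_1$ from the explicit $\varphi_{\mathbf{s}}$ and $\chi(\mathfrak{g}_0)=0$, followed by a super Engel argument --- a step the paper simply asserts in the paragraph preceding the lemma.
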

\begin{proof}
	Set $N$ be a simple $U_\chi(\mathfrak{g})$-module. Consider $N$ as a module over $U_\chi(\mathfrak{g}_0)=u(\mathfrak{g}_0)$, and
	take a simple $u(\mathfrak{g}_0)$-submodule $N_0$. Then we have an isomorphism $\varphi: L^0(\lambda)\longrightarrow N_0$ for some $\lambda \in \mathbb{F}_p$. The universal property of tensor products shows that $\varphi$ induces the following homomorphism of $\mathfrak{g}$-modules:
	\begin{align*}
	\varPhi: K_\chi(\lambda)=U_\chi(\mathfrak{g})\otimes_{u(\mathfrak{g}_0)}L^0(\lambda)&\longrightarrow N\\
	D_1^{i_1}D_2^{i_2}D_3^{j}\otimes v &\longmapsto D_1^{i_1}D_2^{i_2}D_3^{j}\cdot \varphi(v)
	\end{align*}
	for $j=0,1$. Since $N$ is simple, $\varPhi$ is surjective, i.e., $N$ is a quotient of $K_\chi(\lambda)$, as desired.
\end{proof}
For $\lambda \in \mathbb{F}_p$, a nonzero vector $v$ in a $\mathfrak{g}$-module $M$ is defined as a singular vector of weight
$\lambda$ if $h\cdot v=\lambda(h)v$ and $u\cdot v=0$ for all $h\in \mathfrak{h}$ and $u\in \mathfrak{n}^++\mathfrak{g}_1$.
From the preceding analysis, the following conclusions can be drawn.

 \textbf{Type I\;:} $\chi(D_1)=\chi(D_2)=1$ and $\chi(D_3)=\chi(\mathfrak{g}_0)=0$.\label{T1}
 \begin{lem}\label{lem3.2}
 	For $\lambda \in \mathbb{F}_p$, $\chi(D_1)=\chi(D_2)=1$ and $\chi(D_3)=\chi(\mathfrak{g}_0)=0$, generalized $\chi$-reduced
 	Kac $\mathfrak{g}$-modules $K_\chi(\lambda)$ are simple.
 \end{lem}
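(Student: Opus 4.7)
The plan is to show that any nonzero $U_\chi(\mathfrak{g})$-submodule $N$ of $K_\chi(\lambda)$ equals $K_\chi(\lambda)$. By PBW for the generalized $\chi$-reduced enveloping superalgebra, and because $\mathfrak{g}_{[-1]}=\mathbb{F}D_1\oplus\mathbb{F}D_2\oplus\mathbb{F}D_3$ is super-abelian, the module $K_\chi(\lambda)$ has $\mathbb{F}$-basis
\[
\{D_1^{a_1}D_2^{a_2}D_3^{c}\otimes v : 0\le a_1<p^{t_1},\; 0\le a_2<p^{t_2},\; c\in\{0,1\},\; v\in B_\lambda\},
\]
where $B_\lambda$ is a fixed basis of $L^0(\lambda)$; in $U_\chi(\mathfrak{g})$ the relations $D_1^{p^{t_1}}=D_2^{p^{t_2}}=1$ hold because $\chi(D_1)=\chi(D_2)=1$ and $D_H(x_1),D_H(x_2)$ have $\varphi_{\mathbf{s}}$-image zero, and $D_3^{2}=0$ is automatic since $[D_H(\xi),D_H(\xi)]=0$. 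It therefore suffices to prove $N\cap(1\otimes L^0(\lambda))\ne 0$: once this holds, the $\mathfrak{g}_{[0]}$-simplicity of $L^0(\lambda)$ (together with the trivial action of $\mathfrak{g}_{\ge 1}$ on $1\otimes L^0(\lambda)$) yields $1\otimes L^0(\lambda)\subseteq N$, and left multiplication by powers of $D_1,D_2,D_3$ exhausts the PBW basis.

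To produce an element of $N\cap(1\otimes L^0(\lambda))$, I would order the triples $(a_1,a_2,c)$ lexicographically and, among all nonzero $w\in N$, pick one whose largest triple $(a_1^{\ast},a_2^{\ast},c^{\ast})$ in its PBW expansion is as small as possible; the goal is to force $(a_1^{\ast},a_2^{\ast},c^{\ast})=(0,0,0)$. The descent relies on the commutator identities
\[
[D_H(f),D_1]=-D_H(D_1 f),\qquad [D_H(f),D_2]=-D_H(D_2 f),\qquad [D_H(f),D_3]=(-1)^{d(f)+1}D_H(D_3 f),
\]
which follow from $[D_H(f),D_H(g)]=D_H(D_H(f)(g))$ applied to $D_2=D_H(x_1)$, $D_1=-D_H(x_2)$, and $D_3=-D_H(\xi)$. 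Iterating them gives a super PBW-reordering of $D_H(f)\cdot D_1^{a_1}D_2^{a_2}D_3^{c}\otimes v$ as a sum of terms of the form $D_1^{a_1-i}D_2^{a_2-j}D_3^{c-k}\,D_H(D_1^{i}D_2^{j}D_3^{k}f)\otimes v$ with binomial coefficients and signs, in which the final factor acts on $v$ either through $\mathfrak{g}_{[0]}\cong\mathrm{osp}(1|2)$ (by the explicit formulas of Section \ref{sub1}) or through $\mathfrak{g}_{\ge 1}$ (as zero). For each non-minimal leading triple I would choose an $f$ of the form $x_1^{(i)}x_2^{(j)}\xi^{k}$ tuned to $(a_1^{\ast},a_2^{\ast},c^{\ast})$, so that the leading contribution is killed while a strictly smaller monomial survives with nonzero coefficient, contradicting minimality.

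The main obstacle is the descent step itself: the super-binomial reordering produces many terms, and the right choice of $f$ must simultaneously cancel the leading monomial and leave a nonzero smaller monomial. Sign bookkeeping from the odd derivation $D_3$, combined with the $\mathrm{osp}(1|2)$-action on $L^0(\lambda)$, requires a careful case analysis, and the cases $\lambda=0$ (with $L^0(\lambda)$ one-dimensional) and $\lambda\ne 0$ (with the weight-space formulas of Section \ref{sub1}) will likely need parallel treatment. Once the minimal leading triple is shown to be $(0,0,0)$, one obtains $w=1\otimes v$ for some $v\ne 0$, and the argument concludes as indicated in the first paragraph.
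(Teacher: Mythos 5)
Your overall architecture matches the paper's: reduce to showing that a nonzero submodule $N$ meets $1\otimes L^0(\lambda)$, then use the simplicity of $L^0(\lambda)$ over $\mathfrak{g}_{[0]}$ and the relations $D_1^{p^{t_1}}=D_2^{p^{t_2}}=1$ in $U_\chi(\mathfrak{g})$ to sweep out the whole PBW basis. That framing, and the identification of the reduced relations coming from $\chi(D_1)=\chi(D_2)=1$, are correct.

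The gap is that the descent step --- which you yourself flag as ``the main obstacle'' --- is precisely where the entire content of the lemma lives, and your sketch of it is not yet a proof and has two concrete problems. First, the lexicographic-minimality scheme is not obviously compatible with the action: when you reorder $D_H(f)\cdot D_1^{a_1}D_2^{a_2}D_3^{c}\otimes v$, the derivatives $D_1^{i}D_2^{j}D_3^{k}f$ do not only land in $\mathfrak{g}_{[0]}$ (acting on $v$) or $\mathfrak{g}_{\geq 1}$ (acting as zero); they can land in $\mathfrak{g}_{[-1]}$, in which case the resulting $D_1,D_2,D_3$ get absorbed back into the PBW factor and can \emph{raise} exponents (this is visible in the paper's computation, where a term $b_{i_1,i_2,N}(i_2-i_1)D_1D_2\otimes v_0$ appears alongside $a_{i_1,i_2,N}D_3\otimes v_0$). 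Combined with the relations $D_i^{p^{t_i}}=1$, which wrap top exponents around to $0$, your monomial order is not monotone under the operators you apply, so ``strictly smaller leading triple'' needs to be argued, not assumed. Second, the nonvanishing of the surviving coefficient is genuinely delicate: the coefficients involve binomial coefficients modulo $p$ and structure constants of the $\mathrm{osp}(1|2)$-action such as $k(\lambda+1-k)$, $\lambda-2k$, $\lambda-2k-1$, and $i_2-i_1$, each of which vanishes for particular values of $k$, $\lambda$, $i_1$, $i_2$. The paper's proof is essentially a long case analysis (the subcases $k=0$, $k=\lambda$, $k=\tfrac{\lambda}{2}$, $k=\tfrac{\lambda-1}{2}$, $i_1=i_2$ versus $i_1\neq i_2$, and $\lambda=1$) devoted exactly to routing around these degenerations, and it tames the combinatorics by first passing to a weight vector for $\mathfrak{h}$, which forces all monomials in the element to share the same value of $i_1-i_2$ (up to the weight of the $L^0(\lambda)$-factor) and then collapsing the sum with a single high-degree operator. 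Until you either carry out your descent with an order that provably decreases and with the degenerate coefficient cases handled, or adopt a weight-vector reduction of this kind, the proof is incomplete at its central step.
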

\begin{proof}
In this type, we divide the different values of $\lambda$ into two cases.

\textbf{Case 1\;: $\lambda = 0$}.

In this case, $L^0(0)=\mathbb{F}v_0$ is one-dimensional, and generalized $\chi$-reduced Kac $\mathfrak{g}$-modules
\begin{align*}
K_\chi(0)={\rm span}_\mathbb{F}\{D_1^{i_1}D_2^{i_2}D_3^j\otimes v_0\mid 0\le i_1\le p^{t_1}-1,\, 0\le i_2\le p^{t_2}-1,\,j=0,1\}.
\end{align*}
	Let $M$ be a nonzero submodule of $K_\chi(0)$. Let $v\in M$ be a nonzero
	weight vector with respect to the Cartan subalgebra $\mathfrak{h}$. Note that
	\begin{align*}
	(x_2D_2-x_1D_1)\cdot D_1^{i_1}D_2^{i_2}\otimes v_0=(i_1-i_2)D_1^{i_1}D_2^{i_2}\otimes v_0 \text{ for } 0\le i_1\le p^{t_1}-1,\,0\le i_2\le p^{t_2}-1
	\end{align*}
	and
	\begin{align*}
	(x_2D_2-x_1D_1)\cdot D_1^{i_1}D_2^{i_2}D_3\otimes v_0=(i_1-i_2)D_1^{i_1}D_2^{i_2}D_3\otimes v_0 \text{ for } 0\le i_1\le p^{t_1}-1,\,0\le i_2\le p^{t_2}-1.
	\end{align*}
	Set $N={\rm min}\{p^{t_1}-i_1-1, p^{t_2}-i_2-1\}$, let $0\ne v\in M$ be a weight vector. Then by a direct analysis of the weight of $D_1^{i_1}D_2^{i_2}D_3^j\otimes v_0$ and by applying appropriate action of $D_1^{s_1}$, $D_2^{s_2}$ and $D_3^{t}$
	to $v$, we can assume that
	\begin{align*}
	v=\sum_{j=0}^{N}a_{i_1,i_2,j}D_1^{i_1+j}D_2^{i_2+j}\otimes v_0+\sum_{j=0}^{N}b_{i_1,i_2,j}D_1^{i_1+j}D_2^{i_2+j}D_3\otimes v_0
	\end{align*}
	with $a_{i_1,i_2,N},\,b_{i_1,i_2,N}\ne0$.
	Then we have
	\begin{align*}
	&(x_1^{(i_1+N-1)}x_2^{(i_2+N)}\xi D_2-x_1^{(i_1+N)}x_2^{(i_2+N-1)}\xi D_1-x_1^{(i_1+N)}x_2^{(i_2+N)}D_3)\cdot v\\
	=&\sum_{j=0}^{N}(-1)^{l_1+l_2}a_{i_1,i_2,j}C_{i_1+j}^{l_1}C_{i_2+j}^{l_2}D_1^{i_1+j-l_1}D_2^{i_2+j-l_1}(x_1^{(i_1+N-l_1-1)}x_2^{(i_2+N-l_2)}\xi D_2-x_1^{(i_1+N-l_1)}x_2^{(i_2+N-l_2-1)}\xi D_1\\
	&-x_1^{(i_1+N-l_1)}x_2^{(i_2+N-l_2)}D_3)\otimes v_{0}\\
	&+\sum_{j=0}^{N}(-1)^{l_1+l_2}b_{i_1,i_2,j}C_{i_1+j}^{l_1}C_{i_2+j}^{l_2}D_1^{i_1+j-l_1}D_2^{i_2+j-l_1}(x_1^{(i_1+N-l_1-1)}x_2^{(i_2+N-l_2)}\xi D_2-x_1^{(i_1+N-l_1)}x_2^{(i_2+N-l_2-1)}\xi D_1\\
	&-x_1^{(i_1+N-l_1)}x_2^{(i_2+N-l_2)}D_3)D_3\otimes v_{0}\\
	=&(-1)^{i_1+i_2}a_{i_1,i_2,N}C_{i_1+N}^{i_1+N}C_{i_2+N}^{i_2+N}D_3\otimes v_{0}+(-1)^{i_1+i_2-1}b_{i_1,i_2,N}C_{i_1+N}^{i_1+N-1}D_1(\xi D_2-x_1D_3)D_3\otimes v_{0}\\
	&-(-1)^{i_1+i_2-1}b_{i_1,i_2,N}C_{i_2+N}^{i_2+N-1}D_2(\xi D_1+x_2D_3)D_3\otimes v_{0}\\
	=&(-1)^{i_1+i_2}a_{i_1,i_2,N}D_3\otimes v_{0}+(-1)^{i_1+i_2}b_{i_1,i_2,N}(i_2-i_1)D_1D_2\otimes v_{0}\in M.
	\end{align*}
	When $i_1\ne i_2$,
	\begin{align*}
	&x_2D_1\cdot ((-1)^{i_1+i_2}a_{i_1,i_2,N}D_3\otimes v_{0}+(-1)^{i_1+i_2}b_{i_1,i_2,N}(i_2-i_1)D_1D_2\otimes v_{0})\\
	=&-(-1)^{i_1+i_2}b_{i_1,i_2,N}(i_2-i_1)D_1^2\otimes v_{0}\in M.
	\end{align*}
	It is clear that
	\begin{align*}
	D_1^{p^{t_1}-2}\cdot D_1^2\otimes v_{0}=\chi(D_1)^{p^{t_1}}\otimes v_{0}=1\otimes v_{0}\in M.
	\end{align*}
	When $i_1= i_2$, $D_3\otimes v_{0}\in M$.
	By
	\begin{align*}
	(\xi D_1+x_2D_3)\cdot D_3\otimes v_{0}=D_1\otimes v_{0}\in M,
	\end{align*}
	we have
	\begin{align*}
	D_1^{p^{t_1}-1}\cdot D_1\otimes v_{0}=\chi(D_1)^{p^{t_1}}\otimes v_{0}=1\otimes v_{0}\in M.
	\end{align*}
	Therefore, $M=K_\chi(0)$. Consequently, $K_\chi(0)$ are simple.
	
\textbf{Case 2\;: $\lambda \ne0$}.

In this case,  the
simple restricted $\mathfrak{g}_{[0]}$-module $L^0(\lambda)$ is $(2\lambda+1)$-dimensional with a basis $\{v_{\lambda,k,l}\mid 0\le k\le\lambda, l=0,1, k+l\le \lambda\}$
with the action of $\mathfrak{g}_{[0]}$ defined as in section $\ref{sub1}$. Then generalized $\chi$-reduced Kac $\mathfrak{g}$-modules
$$K_\chi(\lambda)={\rm span}_\mathbb{F}\{D_1^{i_1}D_2^{i_2}D_3^j\otimes v_{\lambda,k,l}\mid 0\le i_1\le p^{t_1}-1, \,0\le i_2\le p^{t_2}-1,\,j,l=0,1,\,0\le k\le \lambda,\,k+l\le\lambda \}.$$

Let $M$ be a nonzero submodule of $K_\chi(\lambda)$. Set $0\ne w\in M$ to be a weight vector. Maintain $N={\rm min}\{p^{t_1}-i_1-1, p^{t_2}-i_2-1\}$. Then by a direct analysis of the weight of $D_1^{i_1}D_2^{i_2}D_3^j\otimes v_{\lambda,k,l}$ and by applying appropriate action of $D_1^{s_1}$, $D_2^{s_2}$ and $D_3^{t}$
to $w$, we can assume that
\begin{align*}
w=&\sum_{j=0}^{N}c_{i_1,i_2,j,k}D_1^{i_1+j}D_2^{i_2+j}\otimes v_{\lambda,k,0}+\sum_{j=0}^{N}d_{i_1,i_2,j,k}D_1^{i_1+j}D_2^{i_2+j}D_3\otimes v_{\lambda,k,0}
\end{align*}
or
\begin{align*}
w=&\sum_{j=0}^{N}e_{i_1,i_2,j,k}D_1^{i_1+j}D_2^{i_2+j}\otimes v_{\lambda,k,1}+\sum_{j=0}^{N}f_{i_1,i_2,j,k}D_1^{i_1+j}D_2^{i_2+j}D_3\otimes v_{\lambda,k,1}
\end{align*}
for $0\le i_1\le p^{t_1}-1, \,0\le i_2\le p^{t_2}-1$, $0\le k\le \lambda$, $c_{i_1,i_2,N,k},\,c_{i_1,i_2,N-1,k},\,e_{i_1,i_2,N,k},\,e_{i_1,i_2,N-1,k}\ne 0$.  We will now consider two different subcases.

\textbf{Subcase 2.1\;:}
 \begin{align*}
w=&\sum_{j=0}^{N}c_{i_1,i_2,j,k}D_1^{i_1+j}D_2^{i_2+j}\otimes v_{\lambda,k,0}+\sum_{j=0}^{N}d_{i_1,i_2,j,k}D_1^{i_1+j}D_2^{i_2+j}D_3\otimes v_{\lambda,k,0}.
\end{align*}
Clearly,
\begin{align*}
D_3\cdot w=\sum_{j=0}^{N}c_{i_1,i_2,j,k}D_1^{i_1+j}D_2^{i_2+j}D_3\otimes v_{\lambda,k,0}\in M.
\end{align*}
Then
\begin{align*}
	&(x_1^{(i_1+N-1)}x_2^{(i_2+N)}\xi D_2-x_1^{(i_1+N)}x_2^{(i_2+N-1)}\xi D_1-x_1^{(i_1+N)}x_2^{(i_2+N)}D_3)\cdot \sum_{j=0}^{N}c_{i_1,i_2,j,k}D_1^{i_1+j}D_2^{i_2+j}D_3\otimes v_{\lambda,k,0}\\
	=&\sum_{j=0}^{N}(-1)^{l_{1}+l_{2}} e_{i_1, i_2, j, k} C_{i_1+j}^{l_1} C_{i_2+j}^{l_2} D_1^{i_1+j-l_1} D_2^{i_2+j-l_2}(x_1^{(i_1+N-l_1-1)} x_2^{(i_2+N-l_2)} \xi D_2\\
	&-x_1^{(i_1+N-l_{1})} x_{2}^{(i_{2}+N-l_{2}-1)} \xi D_{1}-x_1^{(i_1+N-l_1)} x_2^{(i_2+N-l_2)} D_3) D_3 \otimes v_{\lambda, k, 0}\\
	=&(-1)^{i_1+i_2}(\lambda-2k)c_{i_1,i_2,N-1,k}(1\otimes v_{\lambda,k,0}+D_3\otimes v_{\lambda,k,0})\\
&+(-1)^{i_1+i_2}(\lambda-2k)(c_{i_1,i_2,N,k}(i_1+N)(i_2+N)+i_2-i_1)D_1D_2\otimes v_{\lambda,k,0}\\
	&+(-1)^{i_1+i_2}(\lambda-2k)c_{i_1,i_2,N,k}(i_1+N)(i_2+N)D_1D_2D_3\otimes v_{\lambda,k,0}\\
		&+(-1)^{i_1+i_2}c_{i_1,i_2,N,k}((i_1+N)D_1D_3\otimes v_{\lambda,k,1}-k(i_2+N)D_2D_3\otimes v_{\lambda,k-1,1})\\
	&+(-1)^{i_1+i_2}c_{i_1,i_2,N,k}(C_{i_1+N}^2D_1^2\otimes v_{\lambda,k+1,0}-k(\lambda+1-k)C_{i_2+N}^2D_2^2\otimes v_{\lambda,k-1,0})\in M.
\end{align*}
For simplicity, let $\frac{c_{i_1,i_2,N-1,k}}{c_{i_1,i_2,N,k}}$ be denoted as $a_N$, and the resulting equation obtained by removing the non-zero identical coefficients from the above equation is denoted as $w_1$. Then we have
\begin{align*}
	D_3\cdot w_1=&((\lambda-2k)(i_1+N)(i_2+N)+i_2-i_1)D_1D_2D_3\otimes v_{\lambda,k,0}+a_N(\lambda-2k)D_3\otimes v_{\lambda,k,0}\\
	&+C_{i_1+N}^2D_1^2D_3\otimes v_{\lambda,k+1,0}-k(\lambda+1-k)C_{i_2+N}^2D_2^2D_3\otimes v_{\lambda,k-1,0}\in M.
\end{align*}
We denote the above equation as $w_1'$.

\textbf{Subcase 2.1.1\;:} $1\le k\le \lambda-1$ and $k\ne\frac{\lambda}{2}$.

We have
\begin{align*}
	x_1^{(2)}D_2\cdot w_1'=&-((\lambda-2k)(i_1+N)(i_2+N)+i_2-i_1-C_{i_1+N}^2)D_2D_3\otimes v_{\lambda,k+1,0}\\
	&-C_{i_1+N}^2D_1D_3\otimes v_{\lambda,k+2,0}\in M.
\end{align*}
and
\begin{align*}
&x_2^{(2)}D_1\cdot(((\lambda-2k)(i_1+N)(i_2+N)+i_2-i_1-C_{i_1+N}^2)D_2D_3\otimes v_{\lambda,k+1,0}+C_{i_1+N}^2D_1D_3\otimes v_{\lambda,k+2,0})\\
	=&-((\lambda-2k)(i_1+N)(i_2+N)+i_2-i_1-C_{i_1+N}^2)(\lambda-k)(k+1)D_3\otimes v_{\lambda,k,0}\in M.
\end{align*}
Therefore, we can get $D_3\otimes v_{\lambda,k,0}\in M$. Then from
\begin{align*}
	x_2D_1\cdot D_3\otimes v_{\lambda,k,0}=D_3\otimes x_2D_1\cdot v_{\lambda,k,0}=k(\lambda+1-k)D_3\otimes v_{\lambda,k-1,0}\in M
\end{align*}
that $D_3\otimes v_{\lambda,k-1,0}\in M$. By inductively applying $x_2D_1$, we obtain $D_3\otimes v_{\lambda,0,0}\in M$. Then 
\begin{align*}
	(\xi D_1+x_2D_3)\cdot D_3\otimes v_{\lambda,0,0}=D_1\otimes v_{\lambda,0,0}\in M.
\end{align*}
It is obvious that
\begin{align*}
D_1^{p^{t_1}-1}	\cdot D_1\otimes v_{\lambda,0,0}=\chi(D_1)^{p^{t_1}}\otimes v_{\lambda,0,0}=1\otimes v_{\lambda,0,0}\in M.
\end{align*}
\textbf{Subcase 2.1.2\;:} $k=0$.

By
\begin{align*}
	x_1^{(2)}D_2\cdot w_1'=-(\lambda(i_1+N)(i_2+N)+i_2-i_1-C_{i_1+N}^2)D_2D_3\otimes v_{\lambda,1,0}\in M,
\end{align*}
we can get $D_2D_3\otimes v_{\lambda,1,0}\in M$. Clearly,
\begin{align*}
	x_2^{(2)}D_1\cdot D_2D_3\otimes v_{\lambda,1,0}=-\lambda D_3 \otimes v_{\lambda,0,0}\in M.
\end{align*}
It follows from
\begin{align*}
	(\xi D_1+x_2D_3)\cdot D_3\otimes v_{\lambda,0,0}=D_1\otimes v_{\lambda,0,0}\in M
\end{align*}
that we can get
\begin{align*}
	D_1^{p^{t_1}-1}\cdot D_1\otimes v_{\lambda,0,0}=\chi(D_1)^{p^{t_1}}\otimes v_{\lambda,0,0}=1\otimes v_{\lambda,0,0}\in M.
\end{align*}

\textbf{Subcase 2.1.3\;:} $k=\frac{\lambda}{2}$.

In this subcase, $\lambda$ is even.
For $i_1\ne i_2$, we have
\begin{align*}
	x_1^{(2)}D_2\cdot w_1'=(i_1-i_2+C_{i_1+N}^2)D_2D_3\otimes v_{\lambda,\frac{\lambda+2}{2},0}-C_{i_1+N}^2D_1D_3\otimes v_{\lambda,\frac{\lambda+4}{2},0}\in M.
\end{align*}
Application of $x_2^{(2)}D_1$ to the right-hand side expression of the above equation leads to $D_3\otimes v_{\lambda,\frac{\lambda}{2},0}\in M$. 
By discussing it similarly to the previous, we immediately get $1\otimes v_{\lambda,0,0}\in M$.

For $i_1=i_2$, we have
\begin{align*}
	(x_1^{(2)}\xi D_2-x_1^{(2)}D_3)\cdot w_1'=C_{i_1+N}^2(D_2\otimes v_{\lambda,\frac{\lambda+2}{2},0}-D_3\otimes v_{\lambda,\frac{\lambda+2}{2},1}-D_1\otimes v_{\lambda,\frac{\lambda+4}{2},0})\in M.
\end{align*}
Then
\begin{align*}
	x_2^{(2)}D_1\cdot(D_2\otimes v_{\lambda,\frac{\lambda+2}{2},0}-D_3\otimes v_{\lambda,\frac{\lambda+2}{2},1}-D_1\otimes v_{\lambda,\frac{\lambda+4}{2},0})=\frac{1}{4}\lambda(\lambda+2)\otimes v_{\lambda,\frac{\lambda}{2},0}\in M.
\end{align*}
Hence, we can get $1\otimes v_{\lambda,\frac{\lambda}{2},0}\in M$. It follows from
\begin{align*}
	x_2D_1\cdot1 \otimes v_{\lambda,\frac{\lambda}{2},0}=\frac{1}{4}\lambda(\lambda+2)\otimes v_{\lambda,\frac{\lambda-2}{2},0}\in M,
\end{align*}
that $1\otimes v_{\lambda,\frac{\lambda-2}{2},0}\in M$. 
By continuously acting on the right-hand side of the above equation with $x_2D_1$, we readily obtain $1\otimes v_{\lambda,0,0}\in M$. 

\textbf{Subcase 2.1.4\;:} $k=\lambda$.

It is plain to see that
\begin{align*}
	x_2^{(2)}D_1\cdot w_1'=\lambda C_{i_1+N}^2(D_1D_3\otimes v_{\lambda,\lambda-1,0}-\lambda D_2D_3\otimes v_{\lambda,\lambda-2,0})\in M.
	\end{align*}
Then we have
\begin{align*}
	&(x_1^{(2)}\xi D_2-x_1^{(3)}D_3)\cdot(D_1D_3\otimes v_{\lambda,\lambda-1,0}-\lambda D_2D_3\otimes v_{\lambda,\lambda-2,0})\\
	=&-(x_1\xi D_2-x_1^{(2)}D_3)D_3\otimes v_{\lambda,\lambda-1,0}\\
	=&-1\otimes x_1D_2\cdot v_{\lambda,\lambda-1,0}\\
	=&-1\otimes v_{\lambda,\lambda,0}\in M.
\end{align*}
Applying $x_2D_1$ successively to $1\otimes v_{\lambda,\lambda,0}$ leads directly to $1\otimes v_{\lambda,0,0}\in M$.

\textbf{Subcase 2.2\;:} 
\begin{align*}
	w=&\sum_{j=0}^{N}e_{i_1,i_2,j,k}D_1^{i_1+j}D_2^{i_2+j}\otimes v_{\lambda,k,1}+\sum_{j=0}^{N}f_{i_1,i_2,j,k}D_1^{i_1+j}D_2^{i_2+j}D_3\otimes v_{\lambda,k,1}.
\end{align*}
It is evident that
\begin{align*}
	D_3\cdot w=\sum_{j=0}^{N}e_{i_1,i_2,j,k}D_1^{i_1+j}D_2^{i_2+j}D_3\otimes v_{\lambda,k,0}\in M.
\end{align*}
Then
\begin{align*}
	&(x_1^{(i_1+N-1)}x_2^{(i_2+N)}\xi D_2-x_1^{(i_1+N)}x_2^{(i_2+N-1)}\xi D_1-x_1^{(i_1+N)}x_2^{(i_2+N)}D_3)\cdot \sum_{j=0}^{N}c_{i_1,i_2,j,k}D_1^{i_1+j}D_2^{i_2+j}D_3\otimes v_{\lambda,k,1}\\
	=&\sum_{j=0}^{N}(-1)^{l_{1}+l_{2}} e_{i_1, i_2, j, k} C_{i_1+j}^{l_1} C_{i_2+j}^{l_2} D_1^{i_1+j-l_1} D_2^{i_2+j-l_2}(x_1^{(i_1+N-l_1-1)} x_2^{(i_2+N-l_2)} \xi D_2\\
		&-x_1^{(i_1+N-l_{1})} x_{2}^{(i_{2}+N-l_{2}-1)} \xi D_{1}-x_1^{(i_1+N-l_1)} x_2^{(i_2+N-l_2)} D_3) D_3 \otimes v_{\lambda, k, 1}\\
		=&(-1)^{i_1+i_2}(\lambda-2k-1)(-1)^{i_1+i_2}e_{i_1,i_2,N-1,k}(1\otimes v_{\lambda,k,1}+D_3\otimes v_{\lambda,k,1})\\
		&+(-1)^{i_1+i_2}(\lambda-2k-1)(e_{i_1,i_2,N,k}(i_1+N)(i_2+N)+i_2-i_1)	D_1D_2\otimes v_{\lambda,k,1}\\
		&+(-1)^{i_1+i_2}(\lambda-2k-1)e_{i_1,i_2,N,k}(i_1+N)(i_2+N)D_1D_2D_3\otimes v_{\lambda,k,1}\\
		&+(-1)^{i_1+i_2}e_{i_1,i_2,N,k}((i_1+N)D_1D_3\otimes v_{\lambda,k+1,0}-(\lambda-k)(i_2+N)D_2D_3\otimes v_{\lambda,k,0})\\
		&+(-1)^{i_1+i_2}e_{i_1,i_2,N,k}(C_{i_1+N}^2D_1^2\otimes v_{\lambda,k+1,1}-k(\lambda-k)C_{i_2+N}^2D_2^2\otimes v_{\lambda,k-1,1})\in M.
\end{align*}
To simplify matters, we denote $\frac{e_{i_1,i_2,N-1,k}}{e_{i_1,i_2,N,k}}$
 by $b_N$, and the resulting equation, obtained by removing the non-zero identical coefficients from the preceding equation, is designated as $w_2$. Then we have
\begin{align*}
	D_3\cdot w_2=&((\lambda-2k-1)(i_1+N)(i_2+N)+i_2-i_1)D_1D_2D_3\otimes v_{\lambda,k,1}+b_N(\lambda-2k-1)D_3\otimes v_{\lambda,k,1}\\
	&+C_{i_1+N}^2D_1^2D_3\otimes v_{\lambda,k+1,1}-k(\lambda-k)C_{i_2+N}^2D_2^2D_3\otimes v_{\lambda,k-1,1}\in M.
\end{align*}
We denote the above equation as $w_2'$. We will now consider three different subcases for the different values of $k$.

\textbf{Subcase 2.2.1\;:} $1\le k\le \lambda-1$ and $k\ne\frac{\lambda-1}{2}$.

It is obvious that
\begin{align*}
	x_1^{(2)}D_2\cdot w_2'=&-((\lambda-2k-1)(i_1+N)(i_2+N)+i_2-i_1-C_{i_1+N}^2)D_2D_3\otimes v_{\lambda,k+1,1}\\
	&-C_{i_1+N}^2D_1D_3\otimes v_{\lambda,k+2,1}\in M
\end{align*}
and
\begin{align*}
	&x_2^{(2)}D_1\cdot(((\lambda-2k-1)(i_1+N)(i_2+N)+i_2-i_1-C_{i_1+N}^2)D_2D_3\otimes v_{\lambda,k+1,1}+C_{i_1+N}^2D_1D_3\otimes v_{\lambda,k+2,1})\\
	=&-((\lambda-2k-1)(i_1+N)(i_2+N)+i_2-i_1-C_{i_1+N}^2)(\lambda-k-1)(k+1)D_3\otimes v_{\lambda,k,1}\in M.
\end{align*}
Therefore, we can get $D_3\otimes v_{\lambda,k,1}\in M$. According to
\begin{align*}
	x_2D_1\cdot D_3\otimes v_{\lambda,k,1}=D_3\otimes x_2D_1\cdot v_{\lambda,k,1}=k(\lambda-k)D_3\otimes v_{\lambda,k-1,1}\in M,
\end{align*}
it follows that $D_3\otimes v_{\lambda,k-1,1}\in M$. By inductively applying $x_2D_1$, we obtain $D_3\otimes v_{\lambda,0,1}\in M$. Then we have
\begin{align*}
	(\xi D_2-x_1D_3)\cdot D_3\otimes v_{\lambda,0,1}=D_2\otimes v_{\lambda,0,1}+D_3\otimes v_{\lambda,1,0}\in M
\end{align*}
and
\begin{align*}
	(x_2^{(2)}D_2-x_1x_2D_1)\cdot (D_2\otimes v_{\lambda,0,1}+D_3\otimes v_{\lambda,1,0})=(1-\lambda)\otimes v_{\lambda,0,1}\in M.
\end{align*}
Thus, $1\otimes v_{\lambda,0,1}\in M$. Furthermore,
\begin{align*}
	(\xi D_1+x_2D_3)\cdot 1\otimes v_{\lambda,0,1}=\lambda\otimes v_{\lambda,0,0}\in M.
\end{align*} 
This implies that $1\otimes v_{\lambda,0,0}\in M$.

\textbf{Subcase 2.2.2\;:} $k=0$.

For  $\lambda\ne 1$, following a similar approach as in Subcase 2.2.1, we obtain $1\otimes v_{\lambda,0,0}\in M$.

For $\lambda=1$, when $i_1\ne i_2$, $w_2'=(i_2-i_1)D_1D_2D_3\otimes v_{\lambda,0,1}\in M$. It is clear that 
$$D_1^{p^{t_1}-1}D_2^{p^{t_2}-1}\cdot D_1D_2D_3\otimes v_{1,0,1}=\chi(D_1)^{p^{t_1}}\chi(D_2)^{p^{t_2}}D_3\otimes v_{1,0,1}=D_3\otimes v_{1,0,1}\in M.$$ By discussing it similarly to the previous, we can get $1\otimes v_{1,0,0}\in M$. When $i_1= i_2$, $w_2=(i_1+N)D_1D_3\otimes v_{1,1,0}\in M$.
Further, we can also get $1\otimes v_{1,0,0}\in M$.

\textbf{Subcase 2.2.3\;:} $k=\frac{\lambda-1}{2}$.

When $i_1\ne i_2$, by discussng similarly as before we can also obtain $1\otimes v_{\lambda,0,0}\in M$.
When $i_1= i_2$, by $x_1^{(3)}D_2\cdot w_2'=C_{i_1+N}^2D_3\otimes v_{\lambda,\frac{\lambda+3}{2},1}\in M$, we can also obtain $1\otimes v_{\lambda,0,0}\in M$.

This implies that $1\otimes v_{\lambda,0,0}\in M$ for $\lambda\in \mathbb{F}_p$ and $0\le k\le \lambda$. Therefore, we get $M=K_\chi(\lambda)$. In conclusion, $K_\chi(\lambda)$ are simple, as desired.
\end{proof}

  \textbf{Type II\;:} $\chi(D_1)=1$ and $\chi(D_2)=\chi(D_3)=\chi(\mathfrak{g}_0)=0$.
   \begin{lem}\label{lem3.3}
  	For $\lambda \in \mathbb{F}_p$, $\chi(D_1)=1$ and $\chi(D_2)=\chi(D_3)=\chi(\mathfrak{g}_0)=0$, generalized $\chi$-reduced
  	Kac $\mathfrak{g}$-modules $K_\chi(\lambda)$ are simple.
  \end{lem}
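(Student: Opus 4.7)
The plan is to mirror the blueprint of Lemma \ref{lem3.2}, adapted to the Type II asymmetry $\chi(D_1) = 1$, $\chi(D_2) = 0$. Given a nonzero submodule $M \subseteq K_\chi(\lambda)$ and a nonzero $\mathfrak{h}$-weight vector $w \in M$, I would show by explicit action of elements of $H(2,1;\underline{t})$ that $1 \otimes v_{\lambda,0,0} \in M$, which forces $M = K_\chi(\lambda)$ since the remaining generators of $U_\chi(\mathfrak{g})$ (namely $\mathfrak{g}_{[0]}$ together with $D_1, D_2, D_3$) then span every other basis element of $K_\chi(\lambda)$ starting from $1 \otimes v_{\lambda,0,0}$. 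As in Lemma \ref{lem3.2}, I would split into Case 1 ($\lambda = 0$) and Case 2 ($\lambda \ne 0$), and in Case 2 further split on the parity $l \in \{0,1\}$ of $v_{\lambda,k,l}$ and on the subcases $k = 0$, $1 \le k \le \lambda-1$ (with $k$ away from the boundary values $\lambda/2$, $(\lambda-1)/2$), $k = \lambda$, and the boundary cases themselves.

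The Cartan element $x_2 D_2 - x_1 D_1$ acts on $D_1^{i_1} D_2^{i_2} D_3^j \otimes v_{\lambda,k,l}$ by $(i_1 - i_2) + (\lambda - 2k - l)$, so every weight vector is supported on a single diagonal $i_1 - i_2 = \text{const}$ with fixed $(k,l)$. Using $D_1^{p^{t_1}} = \chi(D_1)^{p^{t_1}} = 1$ (to cycle $D_1$ indices) and $D_2^{p^{t_2}} = 0$ (to annihilate $D_2$-heavy terms), one normalizes $w$ to the form
\begin{align*}
w = \sum_{j=0}^{N} c_j D_1^{i_1+j} D_2^{i_2+j} D_3^{\epsilon} \otimes v_{\lambda,k,l}, \qquad \epsilon \in \{0,1\},
\end{align*}
with $c_N \ne 0$ and $N = \min\{p^{t_1}-i_1-1,\ p^{t_2}-i_2-1\}$, exactly as in Lemma \ref{lem3.2}.

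The core reduction step is to apply the same peeling element
\begin{align*}
x_1^{(i_1+N-1)}x_2^{(i_2+N)}\xi D_2 \;-\; x_1^{(i_1+N)}x_2^{(i_2+N-1)}\xi D_1 \;-\; x_1^{(i_1+N)}x_2^{(i_2+N)} D_3
\end{align*}
to $w$, collapsing the sum onto a short combination of low-degree monomials. Successive applications of $x_2 D_1$, $\xi D_1 + x_2 D_3$, $x_1 D_2$, $x_1^{(2)} D_2$, $x_2^{(2)} D_1$, together with the $\mathrm{osp}(1|2)$-action on $L^0(\lambda)$, organized by subcases mirroring Subcases 2.1.1--2.1.4 and 2.2.1--2.2.3 of Lemma \ref{lem3.2}, would then push the vector down to $D_1^{a} \otimes v_{\lambda,0,0}$ for some $a$. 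Finally, the relation $D_1^{p^{t_1}-a} \cdot D_1^{a} \otimes v_{\lambda,0,0} = \chi(D_1)^{p^{t_1}} \otimes v_{\lambda,0,0} = 1 \otimes v_{\lambda,0,0}$ closes the argument.

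The main obstacle is the loss of the $D_1 \leftrightarrow D_2$ symmetry enjoyed by Type I. Every reduction must be steered so that a pure $D_1$-power survives at the end, since only $\chi(D_1)^{p^{t_1}} = 1$ produces the identity; the relation $\chi(D_2)^{p^{t_2}} = 0$ plays the opposite, annihilating role, which constrains the sequence of operators that can be applied without prematurely killing the vector. I expect extra care in the small-$\lambda$ subcases (particularly $\lambda = 1$) and in the odd-parity $l = 1$ branch, where certain coefficients produced by the peeling operator degenerate and an auxiliary element different from the one used in Type I is required, just as in Subcases 2.2.2--2.2.3 of Lemma \ref{lem3.2}. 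Once each subcase is verified, simplicity of $K_\chi(\lambda)$ follows.
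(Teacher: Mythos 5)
Your proposal is correct and follows essentially the same route as the paper: the same peeling operator, the same case division on $\lambda$, $k$, $l$, and the same endgame of steering every reduction toward a pure $D_1$-power so that $\chi(D_1)^{p^{t_1}}=1$ recovers $1\otimes v_{\lambda,0,0}$. The only difference is economy: the paper does not redo the case analysis but simply observes that $\chi(D_2)^{p^{t_2}}$ was used in exactly one step of the Type I proof (Subcase 2.2.2 with $\lambda=1$, where $D_1D_2D_3\otimes v_{1,0,1}$ was reduced via $D_1^{p^{t_1}-1}D_2^{p^{t_2}-1}$) and patches that single step by instead applying $x_2D_1$ and then powers of $D_1$ — precisely the degenerate spot you flagged as needing an auxiliary element.
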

  \begin{proof}
  	 In this type, what is different from Type I is $0=D_2^{p^{t_2}-1}\cdot D_2\otimes v_{\lambda,k,l}\ne1\otimes v_{\lambda,k,l}$. Since $\chi(D_2)$ is exclusively employed in Subcase 2.2.2, it suffices to examine whether $D_2D_3\otimes v_{1,0,1}\in M$ yields $1\otimes v_{1,0,0}\in M$. Thus, we have
  	 $x_2D_1\cdot D_2D_3\otimes v_{1,0,1}=-D_1D_3\otimes v_{1,0,1}\in M$. By setting $\chi(D_1)=1$, we first obtain $D_3\otimes v_{1,0,1}\in M$, which subsequently yields $1\otimes v_{1,0,0}\in M$ through algebraic manipulation. In conclusion, $K_\chi(\lambda)$ are simple, as desired.
  \end{proof}

\textbf{Type III\;:} $\chi(D_2)=1$ and $\chi(D_1)=\chi(D_3)=\chi(\mathfrak{g}_0)=0$.
  \begin{lem}\label{lem3.4}
	For $\lambda \in \mathbb{F}_p$, $\chi(D_2)=1$ and $\chi(D_1)=\chi(D_3)=\chi(\mathfrak{g}_0)=0$, generalized $\chi$-reduced
	Kac $\mathfrak{g}$-modules $K_\chi(\lambda)$ are simple.
\end{lem}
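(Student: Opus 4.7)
The plan is to follow the template of Lemma \ref{lem3.2}, with the roles of $D_1$ and $D_2$ interchanged wherever $\chi(D_1) = 1$ was exploited. I will reproduce the outer case analysis verbatim: Case 1 ($\lambda = 0$) and Case 2 ($\lambda \ne 0$) with Subcases 2.1.1--2.1.4 and 2.2.1--2.2.3. The preliminary reduction of an arbitrary nonzero weight vector $w \in M$ down to an element of the form $D_3 \otimes v_{\lambda, k, l}$, and further to $D_3 \otimes v_{\lambda, 0, 0}$ via repeated action of $x_2 D_1$, only uses commutators in $\mathfrak{g}$ and is insensitive to which coordinate of $\chi$ is nonzero, so those steps carry over unchanged.

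The substitution takes place at the final step of each subcase. Where Lemma \ref{lem3.2} invokes $(\xi D_1 + x_2 D_3) \cdot D_3 \otimes v_{\lambda, 0, 0} = D_1 \otimes v_{\lambda, 0, 0}$ and clears $D_1$ by $\chi(D_1) = 1$, I will instead apply $(\xi D_2 - x_1 D_3)$. A short super-commutator computation giving $[\xi D_2 - x_1 D_3,\, D_3]_s = D_2$ yields
\[
(\xi D_2 - x_1 D_3) \cdot D_3 \otimes v_{\lambda, 0, 0} \;=\; D_2 \otimes v_{\lambda, 0, 0} \;-\; D_3 \otimes v_{\lambda, 0, 1},
\]
which for $\lambda = 0$ collapses to $D_2 \otimes v_0$; then $D_2^{p^{t_2}-1} \cdot D_2 \otimes v_0 = \chi(D_2)^{p^{t_2}} \otimes v_0 = 1 \otimes v_0 \in M$, as required.

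For $\lambda \ne 0$ the extra term $D_3 \otimes v_{\lambda, 0, 1}$ must be removed. I plan to produce it independently inside $M$ by acting on $D_3 \otimes v_{\lambda, 0, 0}$ with an operator from $\mathfrak{g}_{[0]}$ that lifts $v_{\lambda, 0, 0}$ to $v_{\lambda, 0, 1}$ while leaving the $D_3$ coefficient intact; subtraction then isolates $D_2 \otimes v_{\lambda, 0, 0}$, to which $D_2^{p^{t_2}-1}$ applies to give $1 \otimes v_{\lambda, 0, 0}$. The analogue of the exceptional Subcase 2.2.2 with $\lambda = 1$ and $i_1 \ne i_2$ (which in Lemma \ref{lem3.3} needed a separate $x_2 D_1$-trick) will be handled by its mirror: acting on $D_1 D_2 D_3 \otimes v_{1, 0, 1}$ with $x_1 D_2$ converts it into a scalar multiple of $D_2^2 D_3 \otimes v_{1, 0, 1}$, from which $\chi(D_2) = 1$ produces $1 \otimes v_{1, 0, 0}$.

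The main obstacle will be this separation step: unlike the symmetric computation in Lemma \ref{lem3.2}, the operator $\xi D_2 - x_1 D_3$ does not annihilate $v_{\lambda, 0, 0}$ when $\lambda \ge 1$, so cancelling the $D_3 \otimes v_{\lambda, 0, 1}$ contribution requires real bookkeeping subcase by subcase. A cleaner alternative, which I would check first, is to observe that the map $\tau: x_1 \mapsto x_2,\ x_2 \mapsto -x_1,\ \xi \mapsto \xi$ extends to a $\mathbb{Z}$-graded automorphism of $\mathfrak{g}$ with $\tau(D_1) = D_2$, $\tau(D_2) = -D_1$, $\tau(D_3) = D_3$, and it intertwines the Type II and Type III $p$-characters. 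Once one verifies that $\tau$ is compatible with $\varphi_{\mathbf{s}}$, Lemma \ref{lem3.4} follows from Lemma \ref{lem3.3} by twisting the simple Kac module by $\tau$.
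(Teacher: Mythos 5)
Your overall strategy --- rerun the proof of Lemma \ref{lem3.2} and repair only the steps that consumed $\chi(D_1)=1$, now using $\chi(D_2)=1$ instead --- is exactly the paper's, and your handling of $\lambda=0$ and of the exceptional $\lambda=1$ subcase (turning $D_1D_2D_3\otimes v_{1,0,1}$ into a multiple of $D_2^2D_3\otimes v_{1,0,1}$ via $x_1D_2$ and then clearing $D_2$'s) is sound. However, both mechanisms you offer for general $\lambda\ne 0$ have real problems. The ``cleaner alternative'' via $\tau\colon x_1\mapsto x_2,\ x_2\mapsto -x_1$ does not exist unless $t_1=t_2$: the divided power algebra $\mathcal{O}(2;\underline{t})$ is truncated at height $p^{t_1}$ in $x_1$ but $p^{t_2}$ in $x_2$, so the substitution is not an algebra map when $t_1\ne t_2$ (it would send $x_1^{(p^{t_1})}=0$ to $x_2^{(p^{t_1})}\ne 0$). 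Equivalently, $U_\chi(\mathfrak{g})$ imposes $D_1^{p^{t_1}}=\chi(D_1)^{p^{t_1}}$ but $D_2^{p^{t_2}}=\chi(D_2)^{p^{t_2}}$ with different exponents ($\mathbf{s}=(t_2,t_1,1,\dots)$), so no automorphism interchanges $D_1$ and $D_2$. This asymmetry is precisely why the paper lists Types II and III as separate orbits instead of identifying them.

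In your main route, the separation of $D_2\otimes v_{\lambda,0,0}$ from $D_3\otimes v_{\lambda,0,1}$ cannot be carried out as described: the odd part of $\mathfrak{g}_{[0]}$ is spanned by $\xi D_2-x_1D_3$ and $\xi D_1+x_2D_3$; the latter annihilates $v_{\lambda,0,0}$, and the former is the unique (up to scalar) element lifting $v_{\lambda,0,0}$ to $v_{\lambda,0,1}$ but satisfies $[\xi D_2-x_1D_3,D_3]=D_2\ne 0$, so there is no operator in $\mathfrak{g}_{[0]}$ that ``lifts $v_{\lambda,0,0}$ to $v_{\lambda,0,1}$ while leaving the $D_3$ coefficient intact.'' The paper sidesteps the issue entirely: for $\lambda\ne 0$ it keeps the Type I step $(\xi D_1+x_2D_3)\cdot D_3\otimes v_{\lambda,0,0}=D_1\otimes v_{\lambda,0,0}$, which produces no extra term because $(\xi D_1+x_2D_3)\cdot v_{\lambda,0,0}=0$, and then computes
\[
x_1^{(2)}D_2\cdot D_1\otimes v_{\lambda,0,0}=-1\otimes v_{\lambda,1,0},\qquad
x_2D_1\cdot 1\otimes v_{\lambda,1,0}=\lambda\otimes v_{\lambda,0,0},
\]
so $1\otimes v_{\lambda,0,0}\in M$ follows directly from $\lambda\ne 0$ without ever needing to extract $1\otimes v$ from $D_2\otimes v$; the hypothesis $\chi(D_2)=1$ is only used for $\lambda=0$ and for the $D_1D_3\otimes v_{1,0,1}$ subcase. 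Replacing your separation step by this two-line computation closes the gap.
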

\begin{proof}
In this type, what is different from Type I is $0=D_1^{p^{t_1}-1}\cdot D_1\otimes v_{\lambda,k,l}\ne1\otimes v_{\lambda,k,l}$. For $\lambda=0$, we have
	$x_1D_2\cdot D_1\otimes v_0=-D_2\otimes v_0\in M.$ As a result, $$D_2^{p^{t_2}-1}\cdot D_2\otimes v_0=\chi(D_2)^{p^{t_2}}\otimes v_0=1\otimes v_0\in M.$$
	
	For $1\le\lambda\le p-1$, it suffices to consider the case
	$D_1\otimes v_{\lambda,0,0}$ and $D_1D_3\otimes v_{1,0,1}$. Since
	$$x_1^{(2)}D_2\cdot D_1\otimes v_{\lambda,0,0}=-1\otimes x_1D_2\cdot v_{\lambda,0,0}=-1\otimes v_{\lambda,1,0}\in M$$
	and
	$$x_2D_1\cdot1\otimes v_{\lambda,1,0}=\lambda\otimes v_{\lambda,0,0}\in M,$$
	we can get $1\otimes v_{\lambda,0,0}\in M$. From
$$x_1D_2\cdot D_1D_3\otimes v_{1,0,1}=-D_2D_3\otimes v_{1,0,1}\in M$$
 and $\chi(D_2)=1$ that $D_3\otimes v_{1,0,1}\in M$. Furthermore, we can get $1\otimes v_{1,0,0}\in M$.
	 In a word, $K_\chi(\lambda)$ are simple, as desired.
\end{proof}
From Lemmas \ref{lem3.2}-\ref{lem3.4}, the following conclusions can be clearly obtained.
 \begin{pro}\label{pro3.5}
 	Let $\chi\in\mathfrak{g}^*$ with ${\rm ht}(\chi)=0$.
 	For $\lambda \in \mathbb{F}_p$, generalized $\chi$-reduced
 	Kac $\mathfrak{g}$-modules $K_\chi(\lambda)$ are simple.
 \end{pro}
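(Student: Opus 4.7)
The plan is to reduce the general statement to the three specific cases already handled in Lemmas \ref{lem3.2}, \ref{lem3.3}, and \ref{lem3.4}. The reduction rests on the fact, recalled in the paragraph preceding \textbf{Type I} on page~\pageref{T1}, that under the action of the automorphism group of $\mathfrak{g}$ the $p$-character $\chi$ with $\mathrm{ht}(\chi)=0$ can be conjugated to one of the three normal forms (I), (II), (III). Since generalized $\chi$-reduced Kac modules are entirely determined by the orbit of $\chi$ under this automorphism group (the point emphasised in the Introduction, referring to \cite{WDLJXY2}), simplicity of $K_\chi(\lambda)$ for an arbitrary height-0 $\chi$ is equivalent to simplicity for a chosen representative in its orbit.

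Concretely, I would first fix $\lambda\in\mathbb{F}_p$ and an arbitrary $\chi\in\mathfrak{g}^*$ with $\mathrm{ht}(\chi)=0$, so $\chi(\mathfrak{g}_0)=0$ and $\chi(\mathfrak{g}_{[-1]})\ne 0$. By the orbit classification there exists an automorphism $\Phi$ of $\mathfrak{g}$ such that $\chi\circ\Phi^{-1}$ is of Type I, II, or III. Pulling back along $\Phi$ induces an isomorphism $K_{\chi\circ\Phi^{-1}}(\lambda)\cong K_\chi(\lambda)$ of $U_\chi(\mathfrak{g})$-modules (up to the twist by $\Phi$), so it suffices to prove simplicity for $\chi$ in one of the three standard forms.

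At that point the result follows immediately: Lemma \ref{lem3.2} covers Type I, Lemma \ref{lem3.3} covers Type II, and Lemma \ref{lem3.4} covers Type III, and in each case $K_\chi(\lambda)$ is shown to be simple for every $\lambda\in\mathbb{F}_p$. Combining these three cases exhausts all height-0 $p$-characters up to automorphism and yields the proposition.

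The main obstacle is not in the proof of Proposition \ref{pro3.5} itself, which is essentially a packaging statement, but lies already in the three lemmas cited: the delicate part is the case analysis in Lemma \ref{lem3.2}, where one must track, inside an arbitrary nonzero submodule $M$ of $K_\chi(\lambda)$, a chain of vectors of progressively simpler form and eventually produce the generator $1\otimes v_{\lambda,0,0}$. Once $1\otimes v_{\lambda,0,0}\in M$ is secured for a given type of $\chi$, acting by $U_\chi(\mathfrak{g})$ recovers the whole of $K_\chi(\lambda)$, and the orbit reduction then transfers the conclusion to a general height-0 $p$-character, completing the proof.
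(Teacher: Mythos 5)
Your proposal matches the paper's own treatment: Proposition \ref{pro3.5} is deduced there exactly by combining Lemmas \ref{lem3.2}--\ref{lem3.4} with the earlier reduction, via the automorphism group of $\mathfrak{g}$, of any height-0 $p$-character to one of the three normal forms. Your added remark that the twist by an automorphism induces an isomorphism of the corresponding Kac modules is the (implicit) justification the paper relies on, so the argument is correct and essentially identical.
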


\section{Isomorphism classes of simple modules with $p$-characters of height 0}\label{4}
\
\newline
\indent
In this section, we determine the isomorphism classes of simple modules of $\mathfrak{g}$ for $\lambda\in \mathbb{F}_p$.

\begin{pro}\label{pro3.3.1}
	Let $0\ne\lambda\in \mathbb{F}_p$, $0\ne \mu \in \mathbb{F}_p$. Then $K_\chi(\lambda)\cong K_\chi(\mu)$ if and only if $\lambda=\mu$.
\end{pro}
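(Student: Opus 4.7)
The plan is to settle this via a dimension count, since any $\mathfrak{g}$-module isomorphism is in particular an $\mathbb{F}$-linear isomorphism. The ``if'' direction is immediate from the definition of $K_\chi(\lambda)$, so I concentrate on the converse: $K_\chi(\lambda) \cong K_\chi(\mu) \Rightarrow \lambda = \mu$ for nonzero $\lambda, \mu \in \mathbb{F}_p$.

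First I would pin down $\dim_\mathbb{F} K_\chi(\lambda)$. Since $\chi$ vanishes on $\mathfrak{g}_0$, the only relations defining $U_\chi(\mathfrak{g})$ beyond those of $u(\mathfrak{g}_0)$ come from the generators $D_1, D_2, D_3$ of $\mathfrak{g}_{[-1]}$: namely $D_1^{p^{t_1}} = \chi(D_1)^{p^{t_1}}\cdot 1$, $D_2^{p^{t_2}} = \chi(D_2)^{p^{t_2}}\cdot 1$, and $D_3^2 = 0$ (as $D_3$ is odd with $\chi(D_3) = 0$). A standard PBW-type argument, already implicit in the bases used throughout the proof of Lemma \ref{lem3.2}, then gives
\[
K_\chi(\lambda) = \mathrm{span}_\mathbb{F}\{D_1^{i_1} D_2^{i_2} D_3^j \otimes v \mid 0 \le i_1 \le p^{t_1}-1,\; 0 \le i_2 \le p^{t_2}-1,\; j = 0, 1,\; v \text{ in a basis of } L^0(\lambda)\}
\]
as an $\mathbb{F}$-vector space. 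Combined with $\dim_\mathbb{F} L^0(\lambda) = 2\lambda + 1$ from Section \ref{sub1} (with $\lambda$ identified with its standard integer representative in $\{1, 2, \ldots, p-1\}$), this yields $\dim_\mathbb{F} K_\chi(\lambda) = 2 p^{t_1 + t_2}(2\lambda + 1)$.

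Finally, if $\lambda, \mu \in \{1, 2, \ldots, p-1\}$ are distinct, then $2\lambda + 1$ and $2\mu + 1$ are distinct odd integers in $\{3, 5, \ldots, 2p-1\}$, so $\dim_\mathbb{F} K_\chi(\lambda) \ne \dim_\mathbb{F} K_\chi(\mu)$ and no $\mathbb{F}$-linear isomorphism between them can exist. The argument has essentially no obstacle: the dimension formula is a direct restatement of the bases already established in Section \ref{3}, and the only point requiring care is the identification of $\lambda \in \mathbb{F}_p$ with its non-negative integer representative below $p$ when invoking the formula for $\dim L^0(\lambda)$. A more intricate alternative via singular vectors (sending $1 \otimes v_{\lambda, 0, 0}$ through a hypothetical isomorphism to a singular vector of $\mathfrak{h}$-weight $\lambda$ in $K_\chi(\mu)$, and analyzing such vectors directly) is possible but considerably more laborious than the dimension argument.
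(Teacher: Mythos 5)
Your proof is correct, but it takes a genuinely different and substantially shorter route than the paper. The paper argues by contradiction on the structure of a hypothetical isomorphism $\varphi$: it expands $\varphi(1\otimes v_{\lambda,0,0})$ in the PBW basis of $K_\chi(\mu)$, applies carefully chosen elements of $\mathfrak{n}^+ + \mathfrak{g}_1$ (such as $\xi D_1+x_2D_3$ and various $x_1^{(i)}x_2^{(j)}D_k$) to kill almost all coefficients, reduces to $\varphi(1\otimes v_{\lambda,0,0})=a_{0,0,0}\otimes w_{\mu,0,0}+b_{p^{t_1}-1,p^{t_2}-1,0}D_1^{p^{t_1}-1}D_2^{p^{t_2}-1}D_3\otimes w_{\mu,0,0}$, and only then compares $\mathfrak{h}$-weights to get $\lambda=\mu$. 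Your dimension count bypasses all of this: the freeness of $U_\chi(\mathfrak{g})$ over $u(\mathfrak{g}_0)$ with basis $\{D_1^{i_1}D_2^{i_2}D_3^{j}\}$ is exactly the PBW-type fact the paper already relies on (it is needed to compare coefficients in the proof of Lemma \ref{lem3.2} and is what underlies the dimension formula recorded in the final theorem), and $\dim L^0(\lambda)=2\lambda+1$ is stated in Section \ref{sub1}; since $2\lambda+1$ and $2\mu+1$ are distinct integers for distinct representatives $\lambda,\mu\in\{1,\dots,p-1\}$, the modules have different $\mathbb{F}$-dimensions and cannot be isomorphic. There is no circularity, since the dimension of the induced module does not depend on the isomorphism classification. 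Your argument also disposes of Proposition \ref{pro3.10} for free ($2p^{t_1+t_2}<2(2\mu+1)p^{t_1+t_2}$ for $\mu\ne 0$), whereas the paper gives that case a separate weight-comparison argument. What the paper's heavier approach buys is a method that would survive in settings where distinct highest weights can yield equal dimensions; here that generality is not needed, and your proof is the more economical one. The only point to make explicit in a write-up is the one you already flag: the identification of $\lambda\in\mathbb{F}_p$ with its integer representative when invoking $\dim L^0(\lambda)=2\lambda+1$.
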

\begin{proof}
	Note that
	\begin{align*}
	K_\chi(\lambda)={\rm span}_\mathbb{F}\{D_1^{i_1}D_2^{i_2}D_3^j\otimes v_{\lambda,k,l}\mid 0\le i_1\le p^{t_1}-1, \,0\le i_2\le p^{t_2}-1,\,j,l=0,1,\,0\le k\le \lambda\,,k+l\le\lambda \}
	\end{align*}
	and
	\begin{align*}
	K_\chi(\mu)={\rm span}_\mathbb{F}\{D_1^{i_1}D_2^{i_2}D_3^j\otimes w_{\mu,k,l}\mid 0\le i_1\le p^{t_1}-1, \,0\le i_2\le p^{t_2}-1,\,j,l=0,1,\,0\le k\le  \mu\,,k+l\le\mu \},
	\end{align*}
	where $v_{\lambda,k,l}$ and $w_{\mu,k,l}$ are bases of the simple restricted $\mathfrak{g}_{[0]}$-modules $L^0(\lambda)$ and $L^0(\mu)$, respectively.
	Suppose $\varphi: K_\chi(\lambda)\longrightarrow K_\chi(\mu)$ is an isomorphism and
	\begin{align*}
	\varphi(1\otimes v_{\lambda,0,0})=&\sum_{i_1,i_2}a_{i_1,i_2,k}D_1^{i_1}D_2^{i_2}\otimes w_{\mu,k,0}+\sum_{i_1,i_2}b_{i_1,i_2,k}D_1^{i_1}D_2^{i_2}D_3\otimes w_{\mu,k,0}\\
	&+\sum_{i_1,i_2}c_{i_1,i_2,k}D_1^{i_1}D_2^{i_2}\otimes w_{\mu,k,1}+\sum_{i_1,i_2}d_{i_1,i_2,k}D_1^{i_1}D_2^{i_2}D_3\otimes w_{\mu,k,1},
	\end{align*}
	where $a_{i_1,i_2,k}=b_{i_1,i_2,k}=0$ for $k<0$ or $k>\mu$, $c_{i_1,i_2,k}=d_{i_1,i_2,k}=0$ for $k<0$ or $k\ge \mu$.
	
	Sufficiency is obvious, and we prove necessity below.
	Suppose that there exist $i_{1,0},i_{2,0}$ with $2\le i_{1,0}\le p^{t_1}-1$, $2\le i_{2,0}\le p^{t_2}-1$ such that $a_{i_{1,0},i_{2,0},k}\ne 0$, $b_{i_{1,0},i_{2,0},k}\ne 0$ for $0\le k\le \mu$ and $c_{i_{1,0},i_{2,0},k}\ne 0$, $d_{i_{1,0},i_{2,0},k}\ne 0$ for $0\le k\le\mu-1$.
	
	We have
	\begin{align*}
	0=&\phi((x_1^{(i_{1,0}-1)}x_2^{(i_{2,0}+1)}D_2-x_1^{(i_{1,0})}x_2^{(i_{2,0})}D_1)\cdot (1\otimes v_{\lambda,0,0}))\\
	=&(x_1^{(i_{1,0}-1)}x_2^{(i_{2,0}+1)}D_2-x_1^{(i_{1,0})}x_2^{(i_{2,0})}D_1)\cdot (\sum_{i_1,i_2}a_{i_1,i_2,k}D_1^{i_1}D_2^{i_2}\otimes w_{\mu,k,0}+\sum_{i_1,i_2}b_{i_1,i_2,k}D_1^{i_1}D_2^{i_2}D_3\otimes w_{\mu,k,0}\\
	&+\sum_{i_1,i_2}c_{i_1,i_2,k}D_1^{i_1}D_2^{i_2}\otimes w_{\mu,k,1}+\sum_{i_1,i_2}d_{i_1,i_2,k}D_1^{i_1}D_2^{i_2}D_3\otimes w_{\mu,k,1})\\
	 =&\sum_{i_1,i_2}(-1)^{l_1+l_2}a_{i_1,i_2,k}C_{i_1}^{l_1}C_{i_2}^{l_2}D_1^{i_1-l_1}D_2^{i_2-l_2}(x_1^{(i_{1,0}-l_1-1)}x_2^{(i_{2,0}-l_2+1)}D_2-x_1^{(i_{1,0}-l_1)}x_2^{(i_{2,0}-l_2)}D_1)\otimes w_{\mu,k,0}\\
	 &+\sum_{i_1,i_2}(-1)^{l_1+l_2}b_{i_1,i_2,k}C_{i_1}^{l_1}C_{i_2}^{l_2}D_1^{i_1-l_1}D_2^{i_2-l_2}D_3(x_1^{(i_{1,0}-l_1-1)}x_2^{(i_{2,0}-l_2+1)}D_2-x_1^{(i_{1,0}-l_1)}x_2^{(i_{2,0}-l_2)}D_1)\otimes w_{\mu,k,0}\\
	 &+\sum_{i_1,i_2}(-1)^{l_1+l_2}c_{i_1,i_2,k}C_{i_1}^{l_1}C_{i_2}^{l_2}D_1^{i_1-l_1}D_2^{i_2-l_2}(x_1^{(i_{1,0}-l_1-1)}x_2^{(i_{2,0}-l_2+1)}D_2-x_1^{(i_{1,0}-l_1)}x_2^{(i_{2,0}-l_2)}D_1)\otimes w_{\mu,k,1}\\
	 &+\sum_{i_1,i_2}(-1)^{l_1+l_2}d_{i_1,i_2,k}C_{i_1}^{l_1}C_{i_2}^{l_2}D_1^{i_1-l_1}D_2^{i_2-l_2}D_3(x_1^{(i_{1,0}-l_1-1)}x_2^{(i_{2,0}-l_2+1)}D_2-x_1^{(i_{1,0}-l_1)}x_2^{(i_{2,0}-l_2)}D_1)\otimes w_{\mu,k,1}\\
	=&a_{i_{1,0},i_{2,0},k}(-1)^{i_{1,0}+i_{2,0}-1}i_{2,0}D_2\otimes x_2D_1 \cdot w_{\mu,k,0}+b_{i_{1,0},i_{2,0},k}(-1)^{i_{1,0}+i_{2,0}-1}i_{2,0}D_2D_3\otimes x_2D_1 \cdot w_{\mu,k,0}\\
	&+c_{i_{1,0},i_{2,0},k}(-1)^{i_{1,0}+i_{2,0}-1}i_{2,0}D_2\otimes x_2D_1 \cdot w_{\mu,k,1}+d_{i_{1,0},i_{2,0},k}(-1)^{i_{1,0}+i_{2,0}-1}i_{2,0}D_2D_3\otimes x_2D_1 \cdot w_{\mu,k,1}\\
	&+\sum_{j_1\ge i_{1,0},j_2\ge i_{2,0},j_1+j_2\ne i_{1,0}+i_{2,0}}(-1)^{i_{1,0}+i_{2,0}-1}a_{i_1,i_2,k}C_{i_1}^{i_{1,0}}C_{i_2}^{i_{2,0}-1}D_1^{i_1-i_{1,0}}D_2^{i_2-i_{2,0}+1}\otimes x_2D_1 \cdot w_{\mu,k,0}\\
		&+\sum_{j_1\ge i_{1,0},j_2\ge i_{2,0},j_1+j_2\ne i_{1,0}+i_{2,0}}(-1)^{i_{1,0}+i_{2,0}-1}b_{i_1,i_2,k}C_{i_1}^{i_{1,0}}C_{i_2}^{i_{2,0}-1}D_1^{i_1-i_{1,0}}D_2^{i_2-i_{2,0}+1}D_3\otimes x_2D_1 \cdot w_{\mu,k,0}\\
			&+\sum_{j_1\ge i_{1,0},j_2\ge i_{2,0},j_1+j_2\ne i_{1,0}+i_{2,0}}(-1)^{i_{1,0}+i_{2,0}-1}c_{i_1,i_2,k}C_{i_1}^{i_{1,0}}C_{i_2}^{i_{2,0}-1}D_1^{i_1-i_{1,0}}D_2^{i_2-i_{2,0}+1}\otimes x_2D_1 \cdot w_{\mu,k,1}\\
				&+\sum_{j_1\ge i_{1,0},j_2\ge i_{2,0},j_1+j_2\ne i_{1,0}+i_{2,0}}(-1)^{i_{1,0}+i_{2,0}-1}d_{i_1,i_2,k}C_{i_1}^{i_{1,0}}C_{i_2}^{i_{2,0}-1}D_1^{i_1-i_{1,0}}D_2^{i_2-i_{2,0}+1}D_3\otimes x_2D_1 \cdot w_{\mu,k,1}\\
	&+\sum_{i_1,i_2}(-1)^{i_{1,0}+i_{2,0}-1}a_{i_1,i_2,k}C_{i_1}^{i_{1,0}-1}C_{i_2}^{i_{2,0}}D_1^{i_1-i_{1,0}+1}D_2^{i_2-i_{2,0}}\otimes(x_2D_2-x_1D_1)\cdot w_{\mu,k,0}\\
	&+\sum_{i_1,i_2}(-1)^{i_{1,0}+i_{2,0}-1}a_{i_1,i_2,k}C_{i_1}^{i_{1,0}-2}C_{i_2}^{i_{2,0}+1}D_1^{i_1-i_{1,0}+2}D_2^{i_2-i_{2,0}-1}\otimes x_1D_2\cdot w_{\mu,k,0}\\
	 &+\sum_{i_1,i_2}(-1)^{i_{1,0}+i_{2,0}}a_{i_1,i_2,k}(C_{i_1}^{i_{1,0}-1}C_{i_2}^{i_{2,0}+1}-C_{i_1}^{i_{1,0}}C_{i_2}^{i_{2,0}})D_1^{i_1-i_{1,0}+1}D_2^{i_2-i_{2,0}}\otimes  w_{\mu,k,0}\\
	 &+\sum_{i_1,i_2}(-1)^{i_{1,0}+i_{2,0}+1}b_{i_1,i_2,k}C_{i_1}^{i_{1,0}-1}C_{i_2}^{i_{2,0}}D_1^{i_1-i_{1,0}+1}D_2^{i_2-i_{2,0}}D_3\otimes(x_2D_2-x_1D_1)\cdot w_{\mu,k,0}\\
	&+\sum_{i_1,i_2}(-1)^{i_{1,0}+i_{2,0}}b_{i_1,i_2,k}C_{i_1}^{i_{1,0}-2}C_{i_2}^{i_{2,0}+1}D_1^{i_1-i_{1,0}+2}D_2^{i_2-i_{2,0}-1}D_3\otimes x_1D_2\cdot w_{\mu,k,0}\\
	 &+\sum_{i_1,i_2}(-1)^{i_{1,0}+i_{2,0}+1}b_{i_1,i_2,k}(C_{i_1}^{i_{1,0}-1}C_{i_2}^{i_{2,0}+1}-C_{i_1}^{i_{1,0}}C_{i_2}^{i_{2,0}})D_1^{i_1-i_{1,0}+1}D_2^{i_2-i_{2,0}}D_3\otimes  w_{\mu,k,0}\\
	&+\sum_{i_1,i_2}(-1)^{i_{1,0}+i_{2,0}+1}c_{i_1,i_2,k}C_{i_1}^{i_{1,0}-1}C_{i_2}^{i_{2,0}}D_1^{i_1-i_{1,0}+1}D_2^{i_2-i_{2,0}}\otimes(x_2D_2-x_1D_1)\cdot w_{\mu,k,1}\\
	&+\sum_{i_1,i_2}(-1)^{i_{1,0}+i_{2,0}}c_{i_1,i_2,k}C_{i_1}^{i_{1,0}-2}C_{i_2}^{i_{2,0}+1}D_1^{i_1-i_{1,0}+2}D_2^{i_2-i_{2,0}-1}\otimes x_1D_2\cdot w_{\mu,k,1}\\
	 &+\sum_{i_1,i_2}(-1)^{i_{1,0}+i_{2,0}+1}c_{i_1,i_2,k}(C_{i_1}^{i_{1,0}-1}C_{i_2}^{i_{2,0}+1}-C_{i_1}^{i_{1,0}}C_{i_2}^{i_{2,0}})D_1^{i_1-i_{1,0}+1}D_2^{i_2-i_{2,0}}\otimes  w_{\mu,k,1}\\
	 &+\sum_{i_1,i_2}(-1)^{i_{1,0}+i_{2,0}+1}d_{i_1,i_2,k}C_{i_1}^{i_{1,0}-1}C_{i_2}^{i_{2,0}}D_1^{i_1-i_{1,0}+1}D_2^{i_2-i_{2,0}}D_3\otimes(x_2D_2-x_1D_1)\cdot w_{\mu,k,1}\\
	&+\sum_{i_1,i_2}(-1)^{i_{1,0}+i_{2,0}}d_{i_1,i_2,k}C_{i_1}^{i_{1,0}-2}C_{i_2}^{i_{2,0}+1}D_1^{i_1-i_{1,0}+2}D_2^{i_2-i_{2,0}-1}D_3\otimes x_1D_2\cdot w_{\mu,k,1}\\
	 &+\sum_{i_1,i_2}(-1)^{i_{1,0}+i_{2,0}+1}d_{i_1,i_2,k}(C_{i_1}^{i_{1,0}-1}C_{i_2}^{i_{2,0}+1}-C_{i_1}^{i_{1,0}}C_{i_2}^{i_{2,0}})D_1^{i_1-i_{1,0}+1}D_2^{i_2-i_{2,0}}D_3\otimes  w_{\mu,k,1}.
	\end{align*}
	When $k\ne 0$, since $x_2D_1\cdot w_{\mu,k,0}=k(\mu+1-k)w_{\mu,k-1,0}$ and $x_2D_1\cdot w_{\mu,k,1}=k(\mu-k)w_{\mu,k-1,1}$, it is clear that $a_{i_{1,0},i_{2,0},k}= b_{i_{1,0},i_{2,0},k}=0$ for $1\le k\le \mu$ and $c_{i_{1,0},i_{2,0},k}=d_{i_{1,0},i_{2,0},k}=0$ for $1\le k\le \mu-1$. When $k=0$, from
	\begin{align*}
		0=&\phi((x_1^{(i_{1,0})}x_2^{(i_{2,0})}D_2-x_1^{(i_{1,0}+1)}x_2^{(i_{2,0}-1)}D_1)\cdot (1\otimes v_{\lambda,0,0}))\\
		=&(x_1^{(i_{1,0}-1)}x_2^{(i_{2,0}+1)}D_2-x_1^{(i_{1,0})}x_2^{(i_{2,0})}D_1)\cdot (\sum_{i_1,i_2}a_{i_1,i_2,0}D_1^{i_1}D_2^{i_2}\otimes w_{\mu,0,0}+\sum_{i_1,i_2}b_{i_1,i_2,0}D_1^{i_1}D_2^{i_2}D_3\otimes w_{\mu,0,0}\\
		&+\sum_{i_1,i_2}c_{i_1,i_2,0}D_1^{i_1}D_2^{i_2}\otimes w_{\mu,0,1}+\sum_{i_1,i_2}d_{i_1,i_2,0}D_1^{i_1}D_2^{i_2}D_3\otimes w_{\mu,0,1})
	\end{align*}
	and
	$x_1D_2\cdot w_{\mu,0,0}=w_{\mu,1,0}$ and $x_1D_2\cdot w_{\mu,0,1}=w_{\mu,1,1}$ that $a_{i_{1,0},i_{2,0},0}= b_{i_{1,0},i_{2,0},0}=0$ for $\mu\in\mathbb{F}_p$ and $c_{i_{1,0},i_{2,0},0}=d_{i_{1,0},i_{2,0},0}=0$ for $\mu\ne 1$. For $\mu=1$, similarly, we have
	\begin{align*}
			0=&\phi((x_1^{(i_{1,0}-1)}x_2^{(i_{2,0})}D_2-x_1^{(i_{1,0})}x_2^{(i_{2,0}-1)}D_1)\cdot (1\otimes v_{\lambda,0,0}))\\
		=&(x_1^{(i_{1,0}-1)}x_2^{(i_{2,0})}D_2-x_1^{(i_{1,0})}x_2^{(i_{2,0}-1)}D_1)\cdot (\sum_{i_1,i_2}a_{i_1,i_2,0}D_1^{i_1}D_2^{i_2}\otimes w_{\mu,0,0}+\sum_{i_1,i_2}b_{i_1,i_2,0}D_1^{i_1}D_2^{i_2}D_3\otimes w_{\mu,0,0}\\
		&+\sum_{i_1,i_2}c_{i_1,i_2,0}D_1^{i_1}D_2^{i_2}\otimes w_{\mu,0,1}+\sum_{i_1,i_2}d_{i_1,i_2,0}D_1^{i_1}D_2^{i_2}D_3\otimes w_{\mu,0,1}).
	\end{align*}
When $i_{1,0}\ne i_{2,0}$, based on the above equation, we can derive $$c_{i_{1,0},i_{2,0},0}(i_{1,0}-i_{2,0})D_1D_2\otimes w_{\mu,0,1}+d_{i_{1,0},i_{2,0},0}(i_{1,0}-i_{2,0})D_1D_2D_3\otimes w_{\mu,0,1}=0.$$ Therefore, we can get $c_{i_{1,0},i_{2,0},0}=d_{i_{1,0},i_{2,0},0}=0$.
When $i_{1,0}= i_{2,0}$, we have
	\begin{align*}
	0=&\phi((x_1^{(i_{1,0}-2)}x_2^{(i_{2,0})}D_2-x_1^{(i_{1,0}-1)}x_2^{(i_{2,0}-1)}D_1)\cdot (1\otimes v_{\lambda,0,0}))\\
	=&(x_1^{(i_{1,0}-2)}x_2^{(i_{2,0})}D_2-x_1^{(i_{1,0}-1)}x_2^{(i_{2,0}-1)}D_1)\cdot (\sum_{i_1,i_2}a_{i_1,i_2,0}D_1^{i_1}D_2^{i_2}\otimes w_{\mu,0,0}+\sum_{i_1,i_2}b_{i_1,i_2,0}D_1^{i_1}D_2^{i_2}D_3\otimes w_{\mu,0,0}\\
	&+\sum_{i_1,i_2}c_{i_1,i_2,0}D_1^{i_1}D_2^{i_2}\otimes w_{\mu,0,1}+\sum_{i_1,i_2}d_{i_1,i_2,0}D_1^{i_1}D_2^{i_2}D_3\otimes w_{\mu,0,1}).
\end{align*}
From the preceding equation, it follows that $$\frac{1}{2}c_{i_{1,0},i_{2,0},0}i_{1,0}(i_{1,0}+1)D_1^2D_2\otimes  w_{\mu,0,1}+d_{i_{1,0},i_{2,0},0}i_{1,0}(i_{1,0}+1)D_1^2D_2D_3\otimes  w_{\mu,0,1}=0.$$
Hence, $c_{i_{1,0},i_{2,0},0}=d_{i_{1,0},i_{2,0},0}=0$.
	
	In conclusion, $a_{i_{1,0},i_{2,0},k}= b_{i_{1,0},i_{2,0},k}=0$ for $0\le k\le \mu$ and $c_{i_{1,0},i_{2,0},k}=d_{i_{1,0},i_{2,0},k}=0$ for $0\le k\le \mu-1$, where $1\le i_{1,0}<p^{t_1}-1$, $1\le i_{2,0}<p^{t_2}-1$, which is a contradiction.

	Then by a direct analysis of the weight of $D_1^{i_1}D_2^{i_2}D_3^j\otimes w_{\mu,k,l}$ for $j=0,1$, we can assume that
	\begin{align*}
	\varphi(1\otimes v_{\lambda,0,0})=&a_{0,0,k}\otimes w_{\mu,k,0}+a_{p^{t_1}-1,p^{t_2}-1,k}D_1^{p^{t_1}-1}D_2^{p^{t_2}-1}\otimes w_{\mu,k,0}\\
	&+b_{0,0,k}D_3\otimes w_{\mu,k,0}+b_{p^{t_1}-1,p^{t_2}-1,k}D_1^{p^{t_1}-1}D_2^{p^{t_2}-1}D_3\otimes w_{\mu,k,0}\\
	&+c_{0,p^{t_2}-1,k}D_2^{p^{t_2}-1}\otimes w_{\mu,k,1}+d_{0,p^{t_2}-1,k}D_2^{p^{t_2}-1}D_3\otimes w_{\mu,k,1}
	\end{align*}
	or
	\begin{align*}
	\varphi(1\otimes v_{\lambda,0,0})=&a_{p^{t_1}-1,0,k}D_1^{p^{t_1}-1}\otimes w_{\mu,k,0}+b_{p^{t_1}-1,0,k}D_1^{p^{t_1}-1}D_3\otimes w_{\mu,k,0}\\
	&+c_{0,0,k}\otimes w_{\mu,k,1}+c_{p^{t_1}-1,p^{t_2}-1,k}D_1^{p^{t_1}-1}D_2^{p^{t_2}-1}\otimes w_{\mu,k,1}\\
	&+d_{0,0,k}D_3\otimes w_{\mu,k,1}+d_{p^{t_1}-1,p^{t_2}-1,k}D_1^{p^{t_1}-1}D_2^{p^{t_2}-1}D_3\otimes w_{\mu,k,1}
	\end{align*}
	or
	\begin{align*}
	\varphi(1\otimes v_{\lambda,0,0})=a_{0,p^{t_2}-1,k}D_2^{p^{t_2}-1}\otimes w_{\mu,k,0}+b_{0,p^{t_2}-1,k}D_2^{p^{t_2}-1}D_3\otimes w_{\mu,k,0}
	\end{align*}
	or
	\begin{align*}
	\varphi(1\otimes v_{\lambda,0,0})=c_{p^{t_1}-1,0,k}D_1^{p^{t_1}-1}\otimes w_{\mu,k,1}+d_{p^{t_1}-1,0,k}D_1^{p^{t_1}-1}D_3\otimes w_{\mu,k,1},
	\end{align*}
	where $a_{i_1,i_2,k}=b_{i_1,i_2,k}=0$ for $k<0$ or $k>\mu$, $c_{i_1,i_2,k}=d_{i_1,i_2,k}=0$ for $k<0$ or $k\ge \mu$, $i_1=0$ or $p^{t_1}-1$ and $i_2=0$ or $p^{t_2}-1$.
	
	If
	\begin{align*}
	\varphi(1\otimes v_{\lambda,0,0})=&a_{0,0,k}\otimes w_{\mu,k,0}+a_{p^{t_1}-1,p^{t_2}-1,k}D_1^{p^{t_1}-1}D_2^{p^{t_2}-1}\otimes w_{\mu,k,0}\\
	&+b_{0,0,k}D_3\otimes w_{\mu,k,0}+b_{p^{t_1}-1,p^{t_2}-1,k}D_1^{p^{t_1}-1}D_2^{p^{t_2}-1}D_3\otimes w_{\mu,k,0}\\
	&+c_{0,p^{t_2}-1,k}D_2^{p^{t_2}-1}\otimes w_{\mu,k,1}+d_{0,p^{t_2}-1,k}D_2^{p^{t_2}-1}D_3\otimes w_{\mu,k,1},
	\end{align*}
	since
	\begin{align*}
	0=&\varphi((\xi D_1+x_2D_3)\cdot (1\otimes v_{\lambda,0,0}))\\
	=&a_{0,0,k}k\otimes w_{\mu,k-1,1}+a_{p^{t_1}-1,p^{t_2}-1,k}(D_1^{p^{t_1}-1}D_2^{p^{t_2}-1}\otimes w_{\mu,k-1,1}-D_1^{p^{t_1}-1}D_2^{p^{t_2}-1}D_3\otimes w_{\mu,k,0})\\
	&+b_{0,0,k}(D_1\otimes w_{\mu,k,0}-kD_3\otimes w_{\mu,k-1,1})+b_{p^{t_1}-1,p^{t_2}-1,k}kD_1^{p^{t_1}-1}D_2^{p^{t_2}-1}D_3\otimes w_{\mu,k-1,1}\\
	&+c_{0,p^{t_2}-1,k}((\mu-k) D_2^{p^{t_2}-1}\otimes w_{\mu,k,0}-D_2^{p^{t_2}-2}D_3\otimes w_{\mu,k,1})+d_{0,p^{t_2}-1,k}D_2^{p^{t_2}-1}D_3\otimes w_{\mu,k,0},
	\end{align*}
	we can get $a_{p^{t_1}-1,p^{t_2}-1,k}=b_{0,0,k}=d_{0,p^{t_2}-1,k}=0$ for $0\le k\le \mu$ and
	$a_{0,0,k}=b_{p^{t_1}-1,p^{t_2}-1,k}=c_{0,p^{t_2}-1,k}=0$ for $1\le k\le\mu$. Hence,
	\begin{align*}
	\varphi(1\otimes v_{\lambda,0,0})=a_{0,0,0}\otimes w_{\mu,0,0}+b_{p^{t_1}-1,p^{t_2}-1,0}D_1^{p^{t_1}-1}D_2^{p^{t_2}-1}D_3\otimes w_{\mu,0,0}.
	\end{align*}
	 From comparing the weights of $1\otimes v_{\lambda,0,0}$, $1\otimes w_{\mu,0,0}$ and $D_1^{p^{t_1}-1}D_2^{p^{t_2}-1}D_3\otimes w_{\mu,0,0}$, we get $\lambda=\mu$.
	
	If
	\begin{align*}
	\varphi(1\otimes v_{\lambda,0,0})=&a_{p^{t_1}-1,0,k}D_1^{p^{t_1}-1}\otimes w_{\mu,k,0}+b_{p^{t_1}-1,0,k}D_1^{p^{t_1}-1}D_3\otimes w_{\mu,k,0}\\
	&+c_{0,0,k}\otimes w_{\mu,k,1}+c_{p^{t_1}-1,p^{t_2}-1,k}D_1^{p^{t_1}-1}D_2^{p^{t_2}-1}\otimes w_{\mu,k,1}\\
	&+d_{0,0,k}D_3\otimes w_{\mu,k,1}+d_{p^{t_1}-1,p^{t_2}-1,k}D_1^{p^{t_1}-1}D_2^{p^{t_2}-1}D_3\otimes w_{\mu,k,1},
	\end{align*}
we have
		\begin{align*}
	0=&\varphi((\xi D_1+x_2D_3)\cdot (1\otimes v_{\lambda,0,0}))\\
	=&a_{p^{t_1}-1,0,k}kD_1^{p^{t_1}-1}\otimes w_{\mu,k-1,1}+b_{p^{t_1}-1,0,k}(D_1^{p^{t_1}-1}D_3\otimes w_{\mu,k-1,0}-\chi(D_1)\otimes w_{\mu,k,0})\\
	&+c_{0,0,k}\otimes w_{\mu,k,0}+c_{p^{t_1}-1,p^{t_2}-1,k}((\mu-k) D_1^{p^{t_1}-1}D_2^{p^{t_2}-1}\otimes w_{\mu,k,0}-D_1^{p^{t_1}-1}D_2^{p^{t_2}-2}D_3\otimes w_{\mu,k,1})\\
	&+ d_{p^{t_1}-1,p^{t_2}-1,k}(\chi (D_1)D_2^{p^{t_2}-1}\otimes w_{\mu,k,1}+(\mu-k) D_1^{p^{t_1}-1}D_2^{p^{t_2}-1}D_3\otimes w_{\mu,k,0})\\
	&+d_{0,0,k}(D_1\otimes w_{\mu,k,0}+(\mu-k)D_3\otimes w_{\mu,k,0}).
	\end{align*}
	We discuss this in the following two cases.
	
		\textbf{Case 1\;:} $\chi(D_1)=1$.
		
		In this subcase, it is clear that $a_{p^{t_1}-1,0,k}=0$ for $1\le k\le \mu$, $b_{p^{t_1}-1,0,k}=0$ for $0\le k\le \mu$ and  $c_{0,0,k}=c_{p^{t_1}-1,p^{t_2}-1,k}=d_{0,0,k}=d_{p^{t_1}-1,p^{t_2}-1,k}=0$ for $0\le k\le \mu-1$. Therefore,
	\begin{align*}
\varphi(1\otimes v_{\lambda,0,0})=a_{p^{t_1}-1,0,0}D_1^{p^{t_1}-1}\otimes w_{\mu,0,0}.
\end{align*}
According to 
	\begin{align*}
	0=&\varphi(x_1^{(2)}D_2\cdot (1\otimes v_{\lambda,0,0}))=-a_{p^{t_1}-1,0,0}(2D_1^{p^{t_1}-3}D_2\otimes w_{\mu,0,0}+D_1^{p^{t_1}-2}\otimes  w_{\mu,1,0}),
	\end{align*}	
it follows that $a_{p^{t_1}-1,0,0}=0$, which is a contradiction.
	
	\textbf{Case 2\;:} $\chi(D_1)=0$.
	
	In this subcase, it is easy to see that $a_{p^{t_1}-1,0,k}=b_{p^{t_1}-1,0,k}=0$ for $1\le k\le \mu$ and $c_{0,0,k}=c_{p^{t_1}-1,p^{t_2}-1,k}=d_{0,0,k}=d_{p^{t_1}-1,p^{t_2}-1,k}=0$ for $0\le k\le \mu-1$. Accordingly,
	\begin{align*}
	\varphi(1\otimes v_{\lambda,0,0})=a_{p^{t_1}-1,0,0}D_1^{p^{t_1}-1}\otimes w_{\mu,0,0}+b_{p^{t_1}-1,0,0}D_1^{p^{t_1}-1}D_3\otimes w_{\mu,0,0}.
	\end{align*}
It follows from
	\begin{align*}
	0=\varphi(x_1^{(2)}D_2\cdot (1\otimes v_{\lambda,0,0}))=&-a_{p^{t_1}-1,0,0}(2D_1^{p^{t_1}-3}D_2\otimes w_{\mu,0,0}+D_1^{p^{t_1}-2}\otimes  w_{\mu,1,0})\\
	&-b_{p^{t_1}-1,0,0}(2D_1^{p^{t_1}-3}D_2D_3\otimes w_{\mu,0,0}+D_1^{p^{t_1}-2}D_3\otimes  w_{\mu,1,0})
	\end{align*}	
	that $a_{p^{t_1}-1,0,0}=b_{p^{t_1}-1,0,0}=0$ for $0\le k\le \mu$, which is a contradiction.
	
	If
	\begin{align*}
	\varphi(1\otimes v_{\lambda,0,0})=a_{0,p^{t_2}-1,k}D_2^{p^{t_2}-1}\otimes w_{\mu,k,0}+b_{0,p^{t_2}-1,k}D_2^{p^{t_2}-1}D_3\otimes w_{\mu,k,0},
	\end{align*}
	it follows from
	\begin{align*}
	0=&\varphi((\xi D_1+x_2D_3)\cdot (a_{0,p^{t_2}-1,k}D_2^{p^{t_2}-1}\otimes w_{\mu,k,0}+b_{0,p^{t_2}-1,k}D_2^{p^{t_2}-2}D_3\otimes w_{\mu,k,0})\\
	=&a_{0,p^{t_2}-1,k}(D_2^{p^{t_2}-1}D_3\otimes w_{\mu,k,0}+kD_2^{p^{t_2}-1}\otimes w_{\mu,k-1,0})\\
	&+b_{0,p^{t_2}-1,k}(D_1D_2^{p^{t_2}-1}\otimes w_{\mu,k,0}-kD_2^{p^{t_2}-1}D_3\otimes w_{\mu,k-1,0})
	\end{align*}
	that $a_{0,p^{t_2}-1,k}=b_{0,p^{t_2}-1,k}=0$, which is a contradiction.
	
	If
	\begin{align*}
	\varphi(1\otimes v_{\lambda,0,0})=c_{p^{t_1}-1,0,k}D_1^{p^{t_1}-1}\otimes w_{\mu,k,1}+d_{p^{t_1}-1,0,k}D_1^{p^{t_1}-1}D_3\otimes w_{\mu,k,1},
	\end{align*}
	from
	\begin{align*}
	0=&\varphi((\xi D_1+x_2D_3)\cdot (c_{p^{t_1}-1,0,k}D_1^{p^{t_1}-1}\otimes w_{\mu,k,1}+d_{p^{t_1}-1,0,k}D_1^{p^{t_1}-1}D_3\otimes w_{\mu,k,1})\\
	=&(\mu+k) c_{p^{t_1}-1,0,k}D_1^{p^{t_1}-1}\otimes w_{\mu,k,0}+d_{p^{t_1}-1,0,k}(1\otimes w_{\mu,k,1}-(\mu-k) D_1^{p^{t_1}-1}D_3\otimes w_{\mu,k,0})
	\end{align*}
	we can get $c_{p^{t_1}-1,0,k}=d_{p^{t_1}-1,0,k}=0$ for $0\le k\le \mu-1$, which is a contradiction.
	In summary, $\lambda=\mu$ for $0\ne \lambda,\mu\in \mathbb{F}_p$, as desired.
\end{proof}
\begin{pro}\label{pro3.10}
	Let $\lambda=0$, $ \mu \in \mathbb{F}_p$. Then $K_\chi(0)\cong K_\chi(\mu)$ if and only if $\mu=0$.
\end{pro}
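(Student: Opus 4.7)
Sufficiency is immediate, so assume $\varphi : K_\chi(0) \to K_\chi(\mu)$ is an isomorphism and expand
\[
\varphi(1 \otimes v_0) = \sum \alpha^{(j,l)}_{i_1, i_2, k} \, D_1^{i_1} D_2^{i_2} D_3^j \otimes w_{\mu, k, l}
\]
in the basis of $K_\chi(\mu)$ given in Proposition \ref{pro3.3.1}. My plan is to adapt the strategy of Proposition \ref{pro3.3.1}, exploiting the key simplification that $L^0(0) = \mathbb{F} v_0$ is one-dimensional; thus $1 \otimes v_0$ is annihilated by the whole parabolic $\mathfrak{g}_0 = \mathfrak{g}_{[0]} \oplus \mathfrak{g}_1$, not just by $\mathfrak{b} \oplus \mathfrak{g}_1$. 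In particular the extra relations $x_1 D_2 \cdot (1 \otimes v_0) = 0$ and $(\xi D_2 - x_1 D_3) \cdot (1 \otimes v_0) = 0$ are now available, which were not present for $\lambda \ne 0$.

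The first step is to push the identities $(x_1^{(i_{1,0}-1)} x_2^{(i_{2,0}+1)} D_2 - x_1^{(i_{1,0})} x_2^{(i_{2,0})} D_1) \cdot (1 \otimes v_0) = 0$ through $\varphi$ for $2 \le i_{1,0} \le p^{t_1}-1$ and $2 \le i_{2,0} \le p^{t_2}-1$. The binomial identities that result are exactly those appearing in Proposition \ref{pro3.3.1}, and they force every coefficient $\alpha^{(j,l)}_{i_1, i_2, k}$ with $(i_1, i_2)$ interior to vanish. Next I would apply the $\mathfrak{h}$-weight condition (weight of $1 \otimes v_0$ is $0$, with $D_1, D_2, D_3$ and $w_{\mu,k,l}$ carrying weights $1, -1, 0, \mu - 2k - l$ respectively) together with the extra annihilators $x_2 D_1 \cdot (1 \otimes v_0) = x_1 D_2 \cdot (1 \otimes v_0) = (\xi D_1 + x_2 D_3) \cdot (1 \otimes v_0) = (\xi D_2 - x_1 D_3) \cdot (1 \otimes v_0) = 0$. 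Running the same four-case analysis as in Proposition \ref{pro3.3.1} eliminates three of the four possible corner forms and reduces the survivor to
\[
\varphi(1 \otimes v_0) = \alpha \otimes w_{\mu, 0, 0} + \beta \, D_1^{p^{t_1}-1} D_2^{p^{t_2}-1} D_3 \otimes w_{\mu, 0, 0}
\]
for scalars $\alpha, \beta$ not both zero, since $\varphi$ is injective.

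Both summands on the right have $\mathfrak{h}$-weight congruent to $\mu$ modulo $p$, while the left-hand side has weight $0$; this forces $\mu = 0$, as desired. I expect the main obstacle to be purely mechanical bookkeeping in the first two steps --- tracking which binomial coefficients survive modulo $p$ and matching them against the weight condition --- but each subcase is a direct specialization at $\lambda = 0$ of the computation already carried out in Proposition \ref{pro3.3.1}, and the extra pair of annihilators stemming from one-dimensionality of $L^0(0)$ makes the elimination of the three unwanted corner forms essentially immediate rather than requiring separate arguments.
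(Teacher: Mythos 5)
Your proposal is correct and follows essentially the same route as the paper: the paper's own proof simply reruns the elimination of Proposition \ref{pro3.3.1} with $\lambda=0$ to reduce $\varphi(1\otimes v_0)$ to the corner form supported on $w_{\mu,0,0}$, and then concludes $\mu=0$ by comparing $\mathfrak{h}$-weights, exactly as you do. Your weight bookkeeping ($D_1,D_2,D_3$ of weights $1,-1,0$ and $w_{\mu,k,l}$ of weight $\mu-2k-l$, so both surviving summands have weight $\equiv\mu\pmod p$) is accurate, and the extra annihilators you note from the one-dimensionality of $L^0(0)$ are a harmless bonus rather than a deviation.
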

\begin{proof}
	We can see that sufficiency is obvious, and below we prove necessity. We assume that $\mu\ne 0$. By discussing similarly with Proposition \ref{pro3.3.1} we can get $\varphi(1\otimes v_{0})=a_{0,0,0}\otimes w_{\mu,0,0}$. By comparing the weights of $1\otimes v_{0}$ and $\otimes w_{\mu,0,0}$, we get $\mu=0$, this contradicts our assumptions. Therefore, $\mu=0$, as desired.
\end{proof}

Based on Propositions \ref{pro3.5}-\ref{pro3.10} we can obtain the following main result.
\begin{thm}
	Let $\mathfrak{g}=H(2,1,\underline{t})$ be Hamiltonian superalgebras over $\mathbb{F}$, an algebraically closed field of prime characteristic $p>3$ and $t\in \mathbb{N}^{2}$.
	Let $\chi\in\mathfrak{g}^*$ with ${\rm ht}(\chi)=0$.
	For $\lambda \in \mathbb{F}_p$, generalized $\chi$-reduced
	Kac $\mathfrak{g}$-modules $K_\chi(\lambda)$ are simple.
	In addition, all non-isomorphic simple generalized
	$\chi$-reduced $\mathfrak{g}$-modules have been exhausted by $\{K_\chi(\lambda)\mid \lambda\in\mathbb{F}_p\}$.
	Below is a list of the dimensions of the simple modules:
	\begin{align*}
{\rm dim }K_\chi(\lambda)=\begin{cases}
	2p^{t_1+t_2} & \text{ if } \lambda=0 ,\\
	2(2\lambda+1)p^{t_1+t_2}& \text{ if } \lambda \ne 0.
	\end{cases}
	\end{align*}
\end{thm}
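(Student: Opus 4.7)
The plan is to assemble the theorem from the three propositions already in hand, with dimensions produced by straightforward counting on the PBW-type basis of $U_\chi(\mathfrak{g})\otimes_{u(\mathfrak{g}_0)}L^0(\lambda)$. Simplicity of $K_\chi(\lambda)$ for every $\lambda\in\mathbb{F}_p$ is exactly the content of Proposition \ref{pro3.5} (which collects Lemmas \ref{lem3.2}–\ref{lem3.4} covering the three types of $\chi$ with $\mathrm{ht}(\chi)=0$), so I would just invoke it directly.

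For the classification of simples, I would begin from the lemma proved at the start of Section \ref{3}: every simple $U_\chi(\mathfrak{g})$-module $N$ is a quotient of some $K_\chi(\lambda)$. Combining this with the simplicity of $K_\chi(\lambda)$ gives $N\cong K_\chi(\lambda)$. Thus the family $\{K_\chi(\lambda)\mid \lambda\in\mathbb{F}_p\}$ exhausts the simple objects of $U_\chi(\mathfrak{g})$-mod. To see that no two members of this family coincide, I would cite Proposition \ref{pro3.3.1} (the $\lambda,\mu\neq 0$ case) together with Proposition \ref{pro3.10} (the case $\lambda=0$), which together imply $K_\chi(\lambda)\cong K_\chi(\mu)\Longleftrightarrow \lambda=\mu$.

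The dimension count is immediate from the explicit bases already established in the paper. By the PBW theorem applied to the generalized $\chi$-reduced structure, $U_\chi(\mathfrak{g})\otimes_{u(\mathfrak{g}_0)}L^0(\lambda)$ has basis
\begin{equation*}
\{D_1^{i_1}D_2^{i_2}D_3^{j}\otimes v_{\lambda,k,l}\mid 0\le i_1\le p^{t_1}-1,\ 0\le i_2\le p^{t_2}-1,\ j,l\in\{0,1\}\},
\end{equation*}
where $v_{\lambda,k,l}$ runs through a basis of $L^0(\lambda)$. When $\lambda=0$, $\dim L^0(0)=1$, giving $\dim K_\chi(0)=2p^{t_1+t_2}$; when $\lambda\ne 0$, $\dim L^0(\lambda)=2\lambda+1$, giving $\dim K_\chi(\lambda)=2(2\lambda+1)p^{t_1+t_2}$.

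The theorem is therefore essentially a bookkeeping statement, and no new obstacle appears; the real analytic work has already been carried out in the three lemmas on simplicity and in Propositions \ref{pro3.3.1} and \ref{pro3.10}. The only point to double-check is that the PBW basis of $U_\chi(\mathfrak{g})$ relative to $u(\mathfrak{g}_0)$ indeed has the form claimed, which follows from the generalized restricted structure of $\mathfrak{g}$ established in Section \ref{Preliminaries} (so that $D_1,D_2$ contribute factors of order $p^{t_1},p^{t_2}$ respectively, and $D_3$ contributes a factor of order $2$).
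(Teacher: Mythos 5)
Your proposal is correct and follows essentially the same route as the paper, which likewise derives the theorem directly from Proposition \ref{pro3.5} (simplicity), the lemma that every simple $U_\chi(\mathfrak{g})$-module is a quotient of some $K_\chi(\lambda)$, Propositions \ref{pro3.3.1} and \ref{pro3.10} (pairwise non-isomorphism), and the PBW-type count $\dim K_\chi(\lambda)=2p^{t_1+t_2}\dim L^0(\lambda)$. The only difference is that you spell out the bookkeeping which the paper leaves implicit.
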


\end{document}